\numberwithin{equation}{section}
\def\ep{\varepsilon}
\theoremstyle{plain}
\newtheorem{Th}{Theorem}[section]
\newtheorem{Lemma}[Th]{Lemma}
\theoremstyle{definition}
\newtheorem{Rem}[Th]{Remark}
\newtheorem{?}[Th]{Problem}
\newtheorem{Ex}[Th]{Example}
\newcommand{\interior}[1]{%
  {\kern0pt#1}^{\mathrm{o}}%
}
\newcommand{\R}{\mathbb{R}}
\newcommand{\N}{\mathbb{N}}
\newcommand{\abs}[1]{\left\lvert#1\right\rvert}
\newcommand{\norm}[1]{\left\lVert#1\right\rVert}
\newcommand{\op}[1]{\operatorname{#1}}
\renewcommand{\hat}[1]{\widehat{#1}}
\numberwithin{equation}{section}
\title{Hamiltonian perturbations in contact Floer homology}
\author{Igor Uljarevi\'c}
\email{igoru@matf.bg.ac.rs}
\address{Faculty of mathematics, University of Belgrade, Studentski trg 16, 11158 Belgrade,
Serbia}
\author{Jun Zhang}
\email{jun.zhang.3@umontreal.ca}
\address{Centre de Recherches Math\'ematiques, University of Montreal, C.P. 6128 Succ. Centre-Ville Montreal, QC H3C 3J7, Canada}
\begin{document}

\maketitle

\begin{abstract} We study the contact Floer homology ${\rm HF}_*(W, h)$ introduced by Merry-Uljarevi\'c in {\cite{Igor-Merry}}, which associates a Floer-type homology theory to a Liouville domain $W$ and a contact Hamiltonian $h$ on its boundary. The main results investigate the behavior of ${\rm HF}_*(W, h)$ under the perturbations of the input contact Hamiltonian $h$. In particular, we provide sufficient conditions that guarantee ${\rm HF}_*(W, h)$ to be invariant under the perturbations. This can be regarded as a contact geometry analogue of the continuation and bifurcation maps along the Hamiltonian perturbations of Hamiltonian Floer homology in symplectic geometry. As an application, we give an algebraic proof of a rigidity result concerning the positive loops of contactomorphisms for a wide class of contact manifolds. \end{abstract}


\section{Introduction}
Let $(M, \xi = \ker \alpha)$ denote a contact manifold with the co-oriented contact structure $\xi$ given by a contact 1-form $\alpha$. Through the entire paper, we will assume $M = \partial W$ where $W$ is a Liouville domain, an exact symplectic manifold endowed with the symplectic structure $\omega = d\lambda$ and $\alpha = \lambda|_M$. For any given smooth function $h: [0,1] \times M \to \R$, usually called a contact Hamiltonian, one obtains a dynamical system by solving the differential equation
\[\dot{\phi}_t= X_t\circ\phi_t,\]
where $X_t$ is the vector field of the contact Hamiltonian $h_t$, i.e. the vector field determined by
\begin{equation} \label{contact-ham}
    \alpha(X_t)=-h_t \,\,\,\,\mbox{and}\,\,\,\, d\alpha(X_t, \cdot )= dh_t - dh_t(R)\cdot \alpha.
\end{equation}
Here, $R$ denotes the Reeb vector field with respect to the contact form $\alpha$. The flow $\phi_t$ (or $\phi_t^{h}$ if we need to emphasize the contact Hamiltonian $h$) is called the contact Hamiltonian flow. 

\medskip

Mimicking the study of the Hamiltonian dynamics in symplectic geometry, one way to analyze the dynamical data from (\ref{contact-ham}) is to associate a Floer-type homology theory to $(M, \xi = \ker \alpha)$. Based on the generalized maximum principle in \cite{Igor-Merry}, one can construct such a homology theory, called the contact Floer homology of $h$ and denoted by ${\rm HF}_*(W, h)$ or simply ${\rm HF}_*(h)$ (if it is clear from the context what the filling $W$ is). If the flow $\phi_t$ has no 1-periodic orbits, ${\rm HF}_*(h)$ is well-defined as the Hamiltonian Floer homology ${\rm HF}_*(H)$ for a Hamiltonian $H:[0,1]\times\hat{W}\to \R$ on the completion $\hat{W}$ of the Liouville domain $W$. The Hamiltonian $H$ is assumed to satisfy
\begin{equation} \label{convex-end}
H_t(x,r)= r\cdot h_t(x)
\end{equation}
on the convex end of $\hat{W}$ where $\hat{W}$ admits the coordinates $(x,r)\in M\times[1,+\infty).$ The contact Hamiltonian $h$ is called the \emph{slope} of the Hamiltonian $H$. The well-definedness of ${\rm HF}_*(h)$ is deeply due to the Hamiltonian Floer homologies ${\rm HF}_*(H)$ being naturally isomorphic for different Hamiltonians $H$ that have {\bf the same slope}. Moreover, for brevity, the contact Hamiltonians with the corresponding contact Hamiltonian flows having no 1-periodic orbits are called {\it admissible}. The construction above generalizes the classical consideration of the Hamiltonians on $\hat{W}$ that have constant slopes not equal to a period of any Reeb orbit on $M=\partial W$. Those Hamiltonians are the building blocks of Viterbo's symplectic homology, ${\rm SH_*(W)}$,  of the Liouville domain $W$ (see \cite{Viterbo99}). 

\medskip

In the present paper, we investigate the behavior of the contact Floer homology ${\rm HF}_\ast(h)$ when the contact Hamiltonian $h$ is perturbed. 

\subsection{Isomorphism under perturbations}\label{sec:iso&pert}

Recall that in the closed symplectic manifold situation, once the Hamiltonian $H = H^0$ is perturbed to $H^1$, the corresponding Hamiltonian Floer homologies ${\rm HF}_*(H^0)$ and ${\rm HF}_*(H^1)$ are isomorphic via the so-called continuation map 
\begin{equation} \label{symp-per}
c: {\rm HF}_*(H^0) \to {\rm HF}_*(H^1)
\end{equation}
induced by a smooth family of functions $\{H^s\}_{s \in [0,1]}$ from $H^0$ to $H^1$. More explicitly, the map $c$ in (\ref{symp-per}), on the chain complex level, is defined by counting solutions of the $s$-dependent Floer equation that connect 1-periodic orbits of $H^0$ to 1-periodic orbits of $H^1$. For more details, see Section 6 in \cite{SZ92} as a standard reference. In fact, this provides an efficient way to prove the famous Arnold conjecture on the fixed points of Hamiltonian diffeomorphisms. As opposed to the closed case, for Hamiltonians on $\hat{W}$, the continuation map \eqref{symp-per} fails to be well-defined unless the slopes $h^0, h^1$ of $H^0, H^1$, respectively, satisfy the following condition:
\begin{equation} \label{monotone-cond}
h^0_t(x) \leq h^1_t(x) \,\,\mbox{for any $x \in M = \partial{W}$ and $t \in [0,1]$}.
\end{equation} 
For brevity, denote the condition (\ref{monotone-cond}) by $h^0 \leq h^1$. 
\medskip

As the main result in this paper, the following theorem provides a sufficient condition for the contact Floer homologies to be isomorphic when the contact Hamiltonian is perturbed. 

\begin{Th}\label{thm:bifiso}
Let $M$ be the boundary of a Liouville domain $W$. If $h^s:[0,1] \times M \to \R$, $s\in[0,1]$ is a smooth family of admissible contact Hamiltonians, then the contact Floer homologies ${\rm HF}_\ast(h^0)$ and ${\rm HF}_\ast(h^1)$ are isomorphic.
\end{Th}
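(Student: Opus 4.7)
The plan is to prove the theorem by a connectedness argument on the parameter $s \in [0,1]$. Set
$$A := \{\, s \in [0,1] \,:\, {\rm HF}_*(h^s) \cong {\rm HF}_*(h^0) \,\}.$$
Since $0 \in A$ and $[0,1]$ is connected, it suffices to prove that $A$ is open; I would establish the stronger local statement that, for every $s_0 \in [0,1]$, there exists $\varepsilon > 0$ such that ${\rm HF}_*(h^s) \cong {\rm HF}_*(h^{s_0})$ whenever $|s - s_0| < \varepsilon$.

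To prove this local isomorphism, I would use a squeeze argument. Admissibility is an open condition in the $C^\infty$-topology (by continuous dependence of flows on parameters), so one can pick $\delta > 0$ such that every contact Hamiltonian within $C^\infty$-distance $3\delta$ of $h^{s_0}$ is admissible. By continuity of $s \mapsto h^s$, there is $\varepsilon > 0$ so that for $|s-s_0|<\varepsilon$ one has $h^{s_0} - \delta \leq h^s \leq h^{s_0} + \delta$ pointwise on $[0,1]\times M$, and every linear interpolation among the four contact Hamiltonians $h^{s_0} - \delta$, $h^{s_0}$, $h^s$, $h^{s_0} + \delta$ stays inside this neighborhood and hence consists of admissible contact Hamiltonians throughout.

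The problem then reduces to the following key lemma: \emph{if $h_0 \leq h_1$ are admissible contact Hamiltonians whose linear interpolation $(1-t)h_0 + t h_1$ is admissible for every $t \in [0,1]$, then the monotone continuation map ${\rm HF}_*(h_0) \to {\rm HF}_*(h_1)$ is an isomorphism.} Granting the lemma, the monotone continuation maps
$${\rm HF}_*(h^{s_0}-\delta) \to {\rm HF}_*(h^{s_0}) \quad \mbox{and} \quad {\rm HF}_*(h^{s_0}-\delta) \to {\rm HF}_*(h^s)$$
are both isomorphisms, whence ${\rm HF}_*(h^s) \cong {\rm HF}_*(h^{s_0}-\delta) \cong {\rm HF}_*(h^{s_0})$, completing the local step.

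The main obstacle is the key lemma itself. Even when every contact Hamiltonian in the interpolation is admissible at the boundary, the Hamiltonian extensions to $\hat{W}$ can undergo genuine bifurcations of $1$-periodic orbits inside $W$ as $t$ varies. To handle this, I would perturb the interpolating family generically so that only elementary birth--death bifurcations occur at finitely many times $0<t_1<\dots<t_N<1$, decompose the monotone continuation map as a composition of elementary pieces across each bifurcation time, and verify by a direct bifurcation analysis (in the spirit of Floer--Hofer and Salamon) that each piece is a chain homotopy equivalence. The generalized maximum principle of \cite{Igor-Merry} then guarantees compactness of every intermediate moduli space, since admissibility is maintained throughout the interpolating family and no Floer trajectories can escape along the convex end.
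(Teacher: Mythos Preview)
Your connectedness/squeeze reduction is fine, but the key lemma is where the real content lies, and your sketch for it has a genuine gap. The birth--death bifurcation analysis you invoke produces a chain homotopy equivalence --- the \emph{bifurcation map}, built by tracking orbits through the family with handle-slide corrections at the degenerate times --- but this is not the continuation map. Indeed, Theorem~\ref{thm:bifvscont} of this paper shows precisely that the two can disagree: there exist admissible families $h^s$ with $h^0\leq h^1$ for which the continuation map ${\rm HF}_\ast(h^0)\to{\rm HF}_\ast(h^1)$ fails to be an isomorphism, even though the groups are abstractly isomorphic by Theorem~\ref{thm:bifiso}. Your last sentence, that the generalized maximum principle ``guarantees compactness of every intermediate moduli space,'' does not close this gap: the maximum principle confines Floer cylinders for each fixed $H^t$ and for monotone $s$-dependent continuation data, but it says nothing about identifying the continuation map with the bifurcation map. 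In the closed setting that identification comes from an adiabatic-limit argument; on $\hat W$ such an argument would itself require the sort of uniform control at infinity that you have not established.

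You could salvage the squeeze step by weakening the key lemma to an \emph{abstract} isomorphism, since that is all you use. The bifurcation analysis then becomes plausible, but carrying it out rigorously on $\hat W$ --- perturbing the family generically while holding the slopes admissible, and proving compactness of the parametrized moduli spaces near the conical end --- is essentially the full content of the theorem, and you have only asserted it. The paper sidesteps all of this with a different idea: it builds extensions $H^s$ on $\hat W$ that \emph{all share exactly the same $1$-periodic orbits}. The $H^s$ agree on a large compact piece of $\hat W$ and differ only far out on the conical end, where an interpolation function $\mu$ together with the ODE estimates of Lemmas~\ref{lem:investimate}--\ref{lem:existsmu} guarantees no periodic orbits are created. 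The isomorphism is then literally the identity on generators --- a trivial bifurcation map --- and no birth--death analysis or comparison with continuation maps is required.
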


A special case of Theorem~\ref{thm:bifiso}, where the contact Hamiltonains are assumed to be strict (i.e. the associated flows preserve not only the contact distribution but also the contact form) was proven in {\cite{drobnjak2021exotic}}. An easy case, that illustrates Theorem~{\ref{thm:bifiso}, is that of constant slopes, that is $h_t(x)=a$, where the value $a$ is not a period of a Reeb orbit on $M$.} Theorem~\ref{thm:bifiso} asserts that the contact Floer homology ${\rm HF}_\ast(a)$ does not change (up to isomorphism) if $a\in\R$ goes through numbers that are not periods of Reeb orbits on $M$. It is readily verified that, in this case, the promised isomorphism in Theorem \ref{thm:bifiso} can always be realized by the continuation map as in (\ref{symp-per}).

However, in contrast to the constant-slope case, the proof of the isomorphism between contact Floer homologies in general is not straightforward and is quite technically involved. Explicitly, for a given smooth family $h^s$ of admissible contact Hamiltonians on $[0,1] \times M$, one can construct a smooth family $H^s: [0,1] \times \hat{W}\to\R$ of non-degenerate Hamiltonians that all share the same set of 1-periodic orbits and such that $H^s$ has the slope equal to $h^s$. The desired isomorphism in Theorem \ref{thm:bifiso} then comes from the map between Hamiltonian Floer homologies 
\begin{equation} \label{bifur-map}
b: {\rm HF}_\ast(H^0)\to {\rm HF}_\ast(H^1) 
\end{equation}
that sends every 1-periodic orbit $x$ of $H^0$ to itself seen as a 1-periodic orbit of $H^1$. The map $b$ is obviously an isomorphisms, because no additional 1-periodic orbits are created and no existing ones are lost along the perturbation $H^s$. In other words, the map $b$ is a trivial bifurcation map. The proof of Theorem \ref{thm:bifiso} is given by Section \ref{sec-main-proof} with more details provided in Section \ref{sec-lemmas}. 

\subsection{Comparisons of continuation and bifurcation}

In Section~\ref{sec:iso&pert} above, the Floer chain complexes ${\rm CF}_\ast(H^0)$ and ${\rm CF}_\ast(H^1)$ are identical. Namely, all the objects used in the construction of ${\rm CF}_\ast(H^0)$ and ${\rm CF}_\ast(H^1)$ (the 1-periodic orbits and the Floer cylinders) are contained in the region of $\hat{W}$ where $H^0$ and $H^1$ coincide. The map $b$ is equal to the identity already on the level of chain complexes. It is, therefore, a natural question whether this map $b$ complements the continuation maps, i.e. whether $b$ coincides with the continuation map whenever the slope of $H^0$ is less than or equal to the slope of $H^1$. Somewhat surprisingly, the answer is negative. This is a consequence of the next theorem, which asserts that there are situations in which both the isomorphism $b$ and the correcponding continuation map are well-defined but in which the continuation map is not an isomorphism.

Before we state the result, let us recall some notions. Denote by ${\rm Cont}(M, \xi)$ the group of contactomorphisms. A loop in ${\rm Cont}(M, \xi)$ is called positive if it can be generated by a contact Hamiltonian which is pointwise positive. A contact manifold $M$ is called non-orderable if there exists a contractible positive loop in ${\rm Cont}(M, \xi)$ (see Proposition 2.1.A and Proposition 2.1.B in \cite{EP00}). 

\begin{Th}\label{thm:bifvscont}
Let $W^{2n}$ be a Liouville domain whose contact boundary $M = \partial W$ is non-orderable. Assume that the symplectic homology ${\rm SH}_\ast(W)$ is not isomorphic to the (shifted) singular homology ${\rm H}_{\ast+n}(W,\partial W).$ Then, there exists a smooth $s$-family $h^s: [0,1] \times \partial W\to\R$ of admissible contact Hamiltonians where $h^0\leq h^1$ such that the continuation map $c: {\rm HF}_\ast(h^0)\to {\rm HF}_\ast(h^1)$ is {\rm not} an isomorphism.
\end{Th}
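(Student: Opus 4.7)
The plan is to argue by contradiction: assuming every continuation map between admissible contact Hamiltonians joined by a smooth admissible family is an isomorphism, we use non-orderability to construct a cofinal family of admissibles of unbounded slope, each joined to a fixed small-slope admissible by such a family. The presumed isomorphisms then force $\mathrm{SH}_\ast(W) \cong H_{\ast+n}(W, \partial W)$ as a direct limit, contradicting the hypothesis. Note that Theorem~\ref{thm:bifiso} separately supplies abstract isomorphisms $\mathrm{HF}_\ast(h^0) \cong \mathrm{HF}_\ast(h^1_N)$ via bifurcation, so the obstruction is genuinely located in the continuation map rather than in the homology groups.

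By non-orderability of $(M, \xi)$, fix a strictly positive contact Hamiltonian $g \colon [0,1] \times M \to \R_{>0}$ generating a contractible positive loop $\{\phi^g_t\}$ in $\mathrm{Cont}(M,\xi)$. Let $\epsilon > 0$ be smaller than the smallest period of a closed Reeb orbit of $\alpha$, so that $h^0 := \epsilon$ is admissible and $\mathrm{HF}_\ast(h^0) \cong H_{\ast+n}(W, \partial W)$ by the standard computation at small slope. For each $N \geq 1$, the $N$-fold iterate of $\{\phi^g_t\}$, suitably reparametrized, is generated by a non-negative contact Hamiltonian $G_N$ with $\phi^{G_N}_1 = \mathrm{id}$ and $\min G_N \to \infty$ as $N \to \infty$. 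Set $h^1_N := \epsilon + G_N$; then $h^0 \leq h^1_N$, and (after a small generic perturbation if needed) $h^1_N$ is admissible, since its $1$-periodic orbits correspond via the loop structure of $G_N$ to $1$-periodic orbits of $h^0 = \epsilon$, of which there are none. Moreover, the contractibility of $\{\phi^g_t\}$ transfers to its $N$-fold iterate, yielding a smooth family of admissible contact Hamiltonians from $h^0$ to $h^1_N$. Since $\min G_N \to \infty$, the family $\{h^1_N\}_N$ is cofinal in the directed set of admissible contact Hamiltonians ordered by pointwise inequality.

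Let $c_N \colon \mathrm{HF}_\ast(h^0) \to \mathrm{HF}_\ast(h^1_N)$ denote the continuation map associated to this family. If every $c_N$ were an isomorphism, then by functoriality of continuation maps in Hamiltonian Floer theory, the transition maps $\mathrm{HF}_\ast(h^1_N) \to \mathrm{HF}_\ast(h^1_K)$ (for $h^1_N \leq h^1_K$) would also be isomorphisms, giving
\[
\varinjlim_N \mathrm{HF}_\ast(h^1_N) \;\cong\; \mathrm{HF}_\ast(h^0) \;\cong\; H_{\ast+n}(W, \partial W).
\]
On the other hand, by cofinality, this direct limit coincides with Viterbo's symplectic homology $\mathrm{SH}_\ast(W)$, contradicting $\mathrm{SH}_\ast(W) \not\cong H_{\ast+n}(W, \partial W)$. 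Hence some $c_N$ is not an isomorphism, proving the theorem. The main technical obstacle is the construction of $G_N$ and the admissible interpolating family from $h^0$ to $h^1_N$, as well as the verification that $h^1_N$ itself is admissible; all three rely crucially on the non-orderability of $(M, \xi)$, since an orderable $M$ would preclude any admissible family reaching slopes above the first Reeb period starting from $h^0 = \epsilon$.
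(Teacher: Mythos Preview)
Your overall strategy---contradiction, composing a small constant slope with iterates of a contractible positive loop, and passing to the direct limit to obtain $\mathrm{SH}_\ast(W)$---is exactly the paper's approach. The skeleton is right.

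There is, however, a genuine gap in the way you build the admissible family. You set $h^1_N:=\epsilon+G_N$ as a \emph{pointwise} sum and then claim that its $1$-periodic orbits ``correspond via the loop structure of $G_N$'' to those of $h^0=\epsilon$. That correspondence only holds when the time-$1$ map of $h^1_N$ equals $\phi^R_\epsilon\circ\phi^{G_N}_1=\phi^R_\epsilon$, and for this you need the contact cocycle (product) formula, not the naive sum: the contact Hamiltonian of $\phi^R_{\epsilon t}\circ\phi^{G_N}_t$ is
\[
f^{(N)}_t \;=\; \epsilon \;+\; G_{N,t}\circ\bigl(\phi^R_{\epsilon t}\bigr)^{-1},
\]
and likewise for the $s$-family obtained from the contracting homotopy. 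With the bare sum $\epsilon+G_N$ the flow is \emph{not} the composite, so neither the admissibility of $h^1_N$ nor---more seriously---the admissibility of each member of the interpolating family $\epsilon+h^s_{N,t}$ is justified. Your hedge ``after a small generic perturbation if needed'' does not rescue this: a perturbation of the endpoint says nothing about the intermediate $h^s$, and it is precisely the admissibility of the whole family that the contradiction hypothesis requires.

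Once you replace the naive sum by the cocycle formula, everything you wrote goes through: at $s=0$ one gets $f^0=\epsilon$, at $s=1$ one gets $f^{(N)}\geq\epsilon$, and for every $s$ the time-$1$ map is $\phi^R_\epsilon$, hence fixed-point free. The paper then extracts an increasing subsequence $f^{(k_i)}$ (your cofinality remark is fine, but you still need pairwise comparability to run the directed system), and the limit is $\mathrm{SH}_\ast(W)$ as you say. Your use of functoriality $c_K=t_{N,K}\circ c_N$ to conclude that the transition maps are isomorphisms is a clean alternative to invoking the contradiction hypothesis directly on them.
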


In fact, the condition of  ${\rm SH}_\ast(W)$ and ${\rm H}_{\ast +n}(W,\partial W)$ not being isomorphic can be replaced by the following formally weaker condition: there exists a positive time-independent admissible contact Hamiltonian $g:M\to\R$ such that the canonical map ${\rm HF}_*(g)\to {\rm SH}_*(W)$ is not an isomorphism. Conjecturally, this condition always holds. On the other hand, the next result shows that the continuation map and the bifurcation isomorphism do coincide if we add a few additional assumptions. This will be illustrated by the proof of the following theorem. 

\begin{Th}\label{thm:contisiso}
Let $M$ be the contact boundary of a Liouville domain, and let $h^s: [0,1] \times M \to \R$ be a smooth $s$-family of admissible contact Hamiltonians. Assume that $h^0$ is time-independent and positive on $M$, and $h^s$ satisfies the following condition
\[ h^0 \leq h^s\,\,\,\,\mbox{for all $s\in[0,1]$}.\]
Then, the continuation map ${\rm HF}_\ast(h^0)\to {\rm HF}_\ast(h^1)$ is an isomorphism.
\end{Th}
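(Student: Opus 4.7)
The plan is to realise the continuation map $c \colon {\rm HF}_\ast(h^0)\to {\rm HF}_\ast(h^1)$ as a bifurcation isomorphism of the type furnished by Theorem~\ref{thm:bifiso}. The key observation is the following: if a family of admissible contact Hamiltonians is realised on $\hat{W}$ by Hamiltonians $H^s$ that share a common set of $1$-periodic orbits (as in the proof of Theorem~\ref{thm:bifiso}) \emph{and} additionally satisfy $\partial_s H^s\geq 0$, then the $s$-dependent Floer equation counted by the continuation map is the same as the one associated with the bifurcation map, and so $c$ descends to the bifurcation isomorphism on homology. The task therefore reduces to upgrading the given family $h^s$ to one that is monotone non-decreasing in $s$ while preserving its endpoints and the common-orbit property.

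The first step is to use the hypothesis $h^0\leq h^s$ together with the strict positivity of the time-independent function $h^0$ to replace $h^s$ by a monotone family without changing its endpoints. Set
\[
\tilde h^{s,r}_t(x)\;=\;h^s_t(x)+r\,\sigma(s)\,h^0(x),
\]
where $\sigma\co[0,1]\to[0,\infty)$ is smooth, vanishes at $s=0,1$, and is chosen large enough that $\partial_s\tilde h^{s,1}\geq 0$ pointwise; such a $\sigma$ exists because $h^0$ is uniformly positive on the compact $M$ and $h^s$ varies smoothly over the compact parameter interval $[0,1]$. The family $\tilde h^{s,r}$ fixes the endpoints $\tilde h^{0,r}=h^0$, $\tilde h^{1,r}=h^1$ and interpolates between the given family $h^s$ (at $r=0$) and a monotone family (at $r=1$). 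After a small generic perturbation to maintain admissibility for every $(s,r)$, a parametric application of Theorem~\ref{thm:bifiso} in the $r$-direction identifies the continuation maps attached to $\tilde h^{s,0}$ and $\tilde h^{s,1}$ on homology, so it suffices to prove the theorem for the monotone family $\tilde h^{s,1}$.

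For the monotone family, I would invoke the construction in the proof of Theorem~\ref{thm:bifiso} to produce Hamiltonians $H^s$ on $\hat{W}$ with slope $\tilde h^{s,1}$ and a set of non-degenerate $1$-periodic orbits independent of $s$. Because $\tilde h^{s,1}$ is monotone in $s$ and $h^0>0$, one can further arrange $\partial_s H^s\geq 0$ on the convex end by adding a cutoff multiple of $h^0$ supported in a region away from the orbits, without introducing new ones. For such $H^s$ the $s$-dependent continuation moduli spaces agree with the (trivial) bifurcation moduli spaces, so $c$ equals the chain-level identity map and is an isomorphism on homology.

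The principal technical obstacle is the simultaneous enforcement of (i) the prescribed slope $h^s$, (ii) the common $s$-independent orbit set, and (iii) the monotonicity $\partial_s H^s\geq 0$ on the convex end. It is precisely the positivity and time-independence of $h^0$ that supplies the reservoir of positive slope needed to reconcile (i)--(iii); once $h^0$ is allowed to vanish, the very mechanism behind Theorem~\ref{thm:bifvscont}, namely positive loops of contactomorphisms, obstructs any such reconciliation, so the positivity hypothesis here is essential rather than cosmetic.
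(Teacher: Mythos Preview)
Your strategy---show that the continuation map coincides with the bifurcation isomorphism---is the same as the paper's, but the argument has a genuine gap at the decisive step. You assert that once $H^s$ has an $s$-independent set of $1$-periodic orbits and $\partial_s H^s\geq 0$, ``the $s$-dependent continuation moduli spaces agree with the (trivial) bifurcation moduli spaces.'' This does not follow. The continuation map counts solutions of the $s$-dependent Floer equation on all of $\hat{W}$; even if every orbit lies in the compact region where $H^s$ is $s$-independent, a Floer cylinder may escape into the conical end, where $H^s$ genuinely varies, and then return, contributing off-diagonal terms. A confinement argument is required, and you give none. Note that the Hamiltonians produced by the construction in Theorem~\ref{thm:bifiso} have the form $H^s_t(x,r)=r\cdot h^{s\mu(r)}_t(x)$ on the end, so their slope depends on $r$ through $\mu$ and the generalised maximum principle for monotone slopes does not apply directly.

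The paper closes exactly this gap using the no-escape lemma (Lemma~19.3 in \cite{Ritter-TQFT}): one takes the hypersurface $\partial V=\{(x,r):r\cdot h^0(x)=1\}$, chooses $J$ of contact type along it, and observes that since $H^s$ coincides with $H^0=r\cdot h^0(x)$ in a neighbourhood of $\partial V$, every finite-energy solution with asymptotics in $V$ stays in $V$. This is precisely where the positivity and time-independence of $h^0$ enter: they make $\partial V$ a smooth contact-type hypersurface and a level set of the (locally $s$-independent) Hamiltonian. Your final paragraph correctly senses that these hypotheses are essential, but locates their role in the monotone-replacement step rather than in the confinement. In fact the whole detour through $\tilde h^{s,r}$ is unnecessary: the continuation map ${\rm HF}_\ast(h^0)\to{\rm HF}_\ast(h^1)$ depends only on the endpoints, so replacing the interpolating family by a monotone one does not change it; and the claim that a small generic perturbation restores admissibility across the two-parameter family $(s,r)$ is itself unjustified.
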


The proofs of Theorem \ref{thm:bifvscont} and Theorem \ref{thm:contisiso} are given in Section \ref{sec-main-proof}. 

\subsection{Rigidity of positive loops} Theorem \ref{thm:contisiso} has a direct application to the study of positive loops in ${\rm Cont}(M, \xi)$. The notion of positive loops in ${\rm Cont}(M, \xi)$ has been introduced by Eliashberg and Polterovich in \cite{EP00} in the context of orderability of contact manifolds. Ever since, positive loops have played an important role in the study of contact geometry. A remarkable result along these lines is the relation between contractible positive loops in ${\rm Cont}(M, \xi)$ and the contact non-squeezing phenomena (especially on contact balls $B^{2n}(R) \times S^1$) discovered in \cite{EKP06} (see Section~1.7, Theorem~1.3, and Theorem~1.5). More results in this direction can be found in {\cite{San11}},  Theorem 1.5 in \cite{Chiu17}, Theorem 1.2 in \cite{{Fra16}}, and Theorem 1.22 in \cite{Alber-Merry}.

It has been observed that for some contact manifolds, for which the contact non-squeezing theorem holds, positive loops exhibit a certain rigidity, as a matter of fact, the contracting homotopies of positive loops do. More precisely, as shown in Theorem 1.11 in \cite{EKP06} or more generally in Theorem 1.25 in \cite{Alber-Merry}, contact non-squeezing implies that the $s$-family of contact Hamiltonians $h_t^s$ furnished by the contracting homotopy must have a sufficiently small negative value (large in absolute value). In particular, a positive loop is not contractible through positive loops. On the other hand, a sophisticated result called the contact systolic inequality (see Theorem 1.2 in \cite{Alber-Fuchs-Merry}) shows that for {\it any} positive loop $\phi$ in ${\rm Cont}(M, \xi = \ker \alpha)$, there exists a positive number $C(\alpha, [\phi])$ such that 
\[ \|h_t\| \geq C(\alpha, [\phi]) \,\,\,\,\mbox{for some norm $\|\cdot\|$ depending on $\alpha$}, \]
where $[\phi]$ denotes the homotopy class of the loop $\phi$. In particular, a positive loop cannot be contractible through positive loops. This holds for any contact manifold (even if it is not fillable), so it generalizes the main result in \cite{CPS16} which only concentrates on overtwisted contact manifolds. 

Both approaches above imply that a positive loop in ${\rm Cont}(M, \xi)$ cannot be contractible through positive loops only. Here, as an application of Theorem \ref{thm:contisiso}, we provide a direct {\it algebraic} proof of this result in the following special case. The novelty is that our proof bypasses both the contact non-squeezing theorem and any quantitative study of the length of positive loops.

\begin{Th} \label{thm-pos-contr}
Let $W$ be a Liouville domain. Assume there exists a time-independent admissible positive contact Hamiltonian $g:\partial W \to \R$ such that the canonical map
\begin{equation} \label{can-map}
{\rm HF}_\ast(g)\to {\rm SH}_\ast(W)
\end{equation}
is {\rm not} an isomorphism. Then, there are no positive loops of contactomorphisms on $\partial W$ that are contractible through positive loops. 
\end{Th}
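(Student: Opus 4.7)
The plan is to argue by contradiction. Suppose there is a positive loop $\psi_t$ of contactomorphisms on $\partial W$, generated by a pointwise positive contact Hamiltonian $h_t$, that is contractible through positive loops via a smooth $s$-family $\psi^s_t$, $s\in[0,1]$, with $\psi^0=\psi$, $\psi^1\equiv{\rm id}$, and generating Hamiltonians $h^s_t\geq 0$ (with $h^1\equiv 0$). The goal is to deduce that the canonical map ${\rm HF}_\ast(g)\to{\rm SH}_\ast(W)$ is an isomorphism, contradicting the hypothesis.

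The central construction composes iterates of the contracting family with the flow $\phi^g_t$ of $g$. For each $M\in\N$, the reparametrized family $\psi^{s,[M]}_t:=\psi^s_{Mt}$ is again a loop for every $s$ (since $\psi^s_1={\rm id}$), generated by $h^{s,[M]}_t:=M\,h^s_{Mt}\geq 0$; this provides a contraction of the $M$-fold iterated loop $\psi^{[M]}:=\psi^{0,[M]}$ through positive loops. Define
\[H^{M,s}_t\;:=\;\bigl(g\,\#\,h^{1-s,[M]}\bigr)_t,\qquad s\in[0,1],\]
where $\#$ denotes the contact Hamiltonian generating the composition $\phi^g_t\circ\psi^{1-s,[M]}_t$. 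The composition formula gives $g\,\#\,k=g+\mu_t\cdot\bigl(k\circ(\phi^g_t)^{-1}\bigr)$ for a strictly positive function $\mu_t$ (the conformal factor of $\phi^g_t$), so $H^{M,s}\geq g$ for every $s$; in particular $H^{M,0}=g\,\#\,0=g$ is time-independent and positive and $H^{M,1}=g\,\#\,h^{[M]}$. Moreover, the time-$1$ map of $H^{M,s}$ equals $\phi^g_1\circ\psi^{1-s,[M]}_1=\phi^g_1$, which has no fixed points on $\partial W$ by admissibility of $g$; hence every $H^{M,s}$ is admissible. Theorem~\ref{thm:contisiso} applied to $\{H^{M,s}\}_{s\in[0,1]}$ then yields that the continuation map
\[c_M\colon {\rm HF}_\ast(g)\longrightarrow{\rm HF}_\ast\bigl(g\,\#\,h^{[M]}\bigr)\]
is an isomorphism for every $M\geq 1$.

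To conclude, since $h>0$ on the compact set $[0,1]\times\partial W$ and $\mu_t$ is uniformly positive, one has $g\,\#\,h^{[M]}\geq g+Mc$ pointwise for some constant $c>0$. Extracting a subsequence $M_k\to\infty$ with $g\,\#\,h^{[M_k]}\leq g\,\#\,h^{[M_{k+1}]}$ pointwise, this sequence is cofinal in the directed system of admissible Hamiltonians and so ${\rm SH}_\ast(W)=\varinjlim_k {\rm HF}_\ast\bigl(g\,\#\,h^{[M_k]}\bigr)$. Functoriality of continuation combined with the fact that each $c_{M_k}$ is an isomorphism forces every structural map ${\rm HF}_\ast\bigl(g\,\#\,h^{[M_k]}\bigr)\to{\rm HF}_\ast\bigl(g\,\#\,h^{[M_{k+1}]}\bigr)$ to be an isomorphism, whence the canonical map ${\rm HF}_\ast(g)\to{\rm SH}_\ast(W)$ is itself an isomorphism, the desired contradiction. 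The main technical point will be to justify the composition formula for contact Hamiltonians (in particular, the positivity of $\mu_t$ and the stability of the time-$1$ map under composition with loops), and to confirm the cofinality and compatibility of the iterated family with the directed system defining ${\rm SH}_\ast(W)$.
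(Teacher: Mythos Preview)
Your argument is correct and follows essentially the same route as the paper's proof: argue by contradiction, compose the contracting family with the flow of $g$ to obtain an $s$-family of admissible contact Hamiltonians dominated from below by $g$, invoke Theorem~\ref{thm:contisiso} to see the continuation maps are isomorphisms, then iterate the loop and pass to a cofinal subsequence to identify the direct limit with ${\rm SH}_\ast(W)$. The only notable difference is cosmetic: you keep the conformal factor $\mu_t$ in the composition formula $g\#k=g+\mu_t\cdot(k\circ(\phi^g_t)^{-1})$, which is the correct general form (the paper writes the formula without $\mu_t$, implicitly relying only on its positivity), but since $\mu_t>0$ this changes nothing in the estimates or the admissibility check.
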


\begin{Rem} Here are three remarks on the hypothesis of Theorem \ref{thm-pos-contr}. (i) For any contact manifold $(M, \xi)$, there always exists admissible contact Hamiltonian $g$ satisfying the condition in Theorem \ref{thm-pos-contr}. Indeed, consider $g \equiv \ep$ for a sufficiently small constant $\ep>0$. Then, due to Yorke's result in \cite{Yorke-1969-PAMS}, its time-1 map does not have any fixed points. Therefore, the contact Floer homology ${\rm HF}_*(g)$ is well-defined. (ii) The symplectic homology ${\rm SH}_*(W)$ is equal to a direct limit of ${\rm HF}_*(g)$ for an increasing family of admissible contact Hamiltonians. Therefore, we have the canonical map ${\rm HF}_*(g) \to {\rm SH}_*(W)$ that appears in (\ref{can-map}). (iii) The condition (\ref{can-map}) can be verified in a rather trivial way in many cases. In subsection \ref{sec-ex}, we provide a list of examples that satisfy (\ref{can-map}). \end{Rem}

\begin{Rem} Everywhere in the main results in this paper, the hypothesis is stated that the contact manifold $M$ is the boundary of a Liouville domain $W$, that is, $M$ is Liouville fillable. In fact, the proofs of the results above are valid also for $M$ that is strongly fillable by a symplectic manifold $W$ on which the symplectic homology theory is well defined. For instance, they are valid if the filling $W$ is ${\mbox{weakly}}^+$ monotone {\cite{HS95}}. \end{Rem}

\noindent{\bf Acknowledgement.} This work was completed while the second author held a CRM-ISM Postdoctoral Research Fellowship at the Centre de recherches math\'ematiques in Montr\'eal. He thanks this Institute for its warm hospitality. This paper is partly motivated by discussions with Egor Shelukhin on the positive loops of contactomorphisms, so we thank for his inspiration. This research was partially supported by the Science Fund of the Republic of Serbia, grant no.~7749891, GWORDS.

\section{Proofs of Theorems \ref{thm:bifiso}, \ref{thm:bifvscont}, and \ref{thm:contisiso}} \label{sec-main-proof}

In this section, we prove the main results, Theorems \ref{thm:bifiso}, {\ref{thm:bifvscont}}, and {\ref{thm:contisiso}, using technical lemmas from Section {\ref{sec-lemmas}}.}

\subsection{Proof of Theorems \ref{thm:bifiso} and \ref{thm:contisiso}} The idea of the proof is to construct a smooth $s$-family $\left(H^s, J^s\right)$ for $s\in[0,1]$ of regular Floer data on the completion $\hat{W}$ such that $H^0$ has slope $h^0$ and such that $H^1$ has slope $h^1$. In our construction,  $\left(H^s, J^s\right)$ for different choices of $s\in[0,1]$ differ only on a conical end where the corresponding Hamiltonians do not have any 1-periodic orbits. 

Let $(H^0, J^0)$ be regular Floer data for the contact Hamiltonian $h^0$. By definition, there exists a positive real number $r_H\in\R^+$ such that $H^0_t(x,r)= r\cdot h^0_t(x)$ for $r\geq r_H$. Let $C\in\R^+$ be such that
\[\ln C\geq \max_{s,t\in[0,1], \,\,x\in M} \abs{dh^s_t(x)(R(x))}.\]
By Lemma~\ref{lem:existsmu}, there exists a smooth function $\mu: \R^+\to [0,1]$ with the following properties:
\begin{enumerate}
    \item the Hamiltonian $G^s_t : M \times\R^+ \to \R$ defined by $(x,r)\mapsto r\cdot h^{s\cdot \mu(r)}_t(x)$ has no 1-periodic orbits for all $s\in[0,1]$;
    \item $\mu(r)=0$ for $r\in\left(0, C^2\cdot r_H\right]$ and $\mu(r)=1$ for $r$ large enough.
\end{enumerate}
For brevity, denote $W^{r} : = \hat{W} \backslash \left(M \times (r, \infty)\right)$ for any $r \geq 0$. Let $(H^s, J^s)$ be regular Floer data on $\hat{W}$ for every $s\in[0,1]$ such that the following conditions hold:
\begin{itemize}
\item[(1)] $H^s_t(x,r)= G^s_t(x,r)$ for $(x,r)\in M \times [r_H, \infty)$;
\item[(2)] $H^s_t(p)= H^0_t(p)$ for $p\in \op{int} W^{r_H}$ and $J^s= J^0$ on $\op{int} W^{C\cdot r_H}$. 
\end{itemize}
By the construction, $H^s$ coincides with $G^s$ on $M \times[r_H, \infty)$ and with $H^0$ on $\op{int} W^{C^2\cdot r_H}.$ Then Lemma~\ref{lem:investimate} implies that the set $\left\{ \phi_t^{H^s}(p)\:|\: t\in[0,1] \right\}$ does not intersect $M \times\left[ C^2\cdot r_H, \infty \right)$ for any $s\in[0,1]$ if $p\in \op{int} W^{C\cdot r_H}.$ Therefore, we have
\[\phi_t^{H^s}(p)= \phi_t^{H^0}(p)\]
for all $s,t\in[0,1]$ and $p\in \op{int} W^{C\cdot r_H}$. Similarly, the set $\left\{ \phi_t^{H^s}(p)\:|\: t\in[0,1] \right\}$ does not intersect the set $\op{int} W^{r_H}$ for any $s\in[0,1]$ if $p\in M \times[C\cdot r_H, \infty)$. Hence, we have
\[\phi_t^{H^s}(p)= \phi_t^{G^s}(p) \]
for all $s, t\in[0,1]$ and $p\in M \times[C\cdot r_H, \infty)$.

Finally, since $M\times[C\cdot r_H, \infty)$ and $\op{int} W^{C\cdot r_H}$ cover $\hat{W}$, the Hamiltonian $H^s$ has no 1-periodic orbits apart from the ones in $\op{int} W^{C\cdot r_H}$. Moreover, on $\op{int} W^{C\cdot r_H}$, the Hamiltonians $H^s$ coincide for different $s\in[0,1]$. Therefore, the desired isomorphism $b: {\rm HF}_\ast(h^0)\to {\rm HF}_\ast(h^1)$ is obtained by mapping each generator of ${\rm HF}_\ast(H^0, J^0)$ to itself seen as a generator of ${\rm HF}_\ast(H^1, J^1)$. This isomorphism is occasionally called \emph{bifurcation isomorphism}.

\medskip

Now, we prove that the bifurcation isomorphism coincides with the continuation map if $h^0$ is autonomous and if $h^s_t(x)\geqslant h^0(x)> 0$ for all $s\in[0,1]$ and $x\in M.$ In this case, $H^0\leqslant H^1 $, pointwise. Hence, there exists continuation data  $\{(G^s, I^s)\}$ from $(H^0, J^0)$ to $(H^1, J^1)$. Since $H^0=H^1$ on $\op{int} W^{C \cdot r_H}$, the Hamiltonian $G^s$ coincides with $H^0$ there for all $s$. One can choose $J^s$ such that $J^s=J^0$ on this set. Let $V$ be the complement of the set $\{(x,r)\in M\times\R^+\:|\: h^0(x)\cdot r>1\}$ in $\hat{W}$. By decreasing $r_H$ and increasing $C$ if necessary, one may assume 
\[\partial V \subset \op{int} W^{C \cdot r_H}\setminus W^{r_H}.\]
We modify $J^0$, if necessary, so that it is of contact type along $\partial V$. By the no-escape lemma, Lemma 19.3 in {\cite{Ritter-TQFT}}, the solutions of the $s$-dependent Floer equation
\[\partial_s u + J^s_t(u)\left( \partial_t u - X^{H^s_t}(u) \right) = 0\]
with finite energy are entirely contained in $\op{int} W^{C \cdot r_H}$. In this region, the continuation data $\{(H^s, J^s)\}$ is $s$-independent. Therefore, by the standard argument in Floer theory, there are no Floer cylinders that connect different $1$-periodic orbits. As a consequence, the continuation map $c: {\rm HF}_\ast(H^0, J^0)\to {\rm HF}_\ast(H^1, J^1)$ coincides with the bifurcation isomorphism $b: {\rm HF}_\ast(h^0)\to {\rm HF}_\ast(h^1)$. In particular, the continuation map $c$ is an isomorphism. \qed

\subsection{Proof of Theorem \ref{thm:bifvscont}}  Assume the contrary, i.e. that the continuation map
\[c: {\rm HF}_*(h^0)\to {\rm HF}_*(h^1)\]
is an isomorphism whenever the contact Hamiltonians $h^0\leqslant h^1$ can be joined by a path of admissible contact Hamiltonians. Since $M$ is non-orderable, there exists a contractible positive loop $\phi_t:M\to M$ of contactomorphisms. Denote by $\phi^s_t$, $s\in[0,1]$ the contracting homotopy, i.e. $\phi_t^0={\rm id}$ for all $t$, $t\mapsto \phi_t^s$ is a loop of contactomorphisms for all $s$, and $\phi_t^1=\phi_t$ for all $t$. Let $h^s_t:M\to\R$ be the contact Hamiltonian of $\phi_t^s$ that is given by (\ref{contact-ham}). Since $\phi_t$ is a positive loop, $h^1>0$. Let $\varepsilon>0$ be a sufficiently small number such that no Reeb orbit on $M$ has a (positive) period less than or equal to $\varepsilon$. Denote by $\phi_t^R:M\to M$ the Reeb flow on $M$, and let $f_t^s:M\to \R$ be the contact Hamiltonian defined by
\begin{equation} \label{cocycle}
f_t^s:= \varepsilon + h_t^s\circ\left( \phi_{\varepsilon t}^R \right)^{-1}.
\end{equation}
By a contact version of the cocycle formula (see Lemma 2.2 in \cite{Muller-Speath}), the flow of the contact Hamiltonian $f_t^s$ is equal to $\phi_{\ep t}^R\circ \phi_t^s$. Since $t\mapsto \phi_t^s$ is a loop of contactomorphisms for all $s$, $\{f_t^s\}_{s\in[0,1]}$ is a smooth family of admissible contact Hamiltonians that join $f^0=\varepsilon$ and $f^1\geqslant\varepsilon$. Therefore, by the assumption, the continuation map
\[{\rm HF}_\ast(\ep)\to {\rm HF}_\ast(f^1)\]
is an isomorphism. Since $\phi_t$ is a contractible positive loop, then so is its $k$-th iterate $t\to \phi_{kt}$. The contact Hamiltonian of the $k$-th iterate is equal to $h^{(k)}_t:=k\cdot h^1_{kt}$. The argument above implies that the continuation map
\[{\rm HF}_*(\ep)\to {\rm HF}_*(f^{(k)})\]
is an isomorphism, where $f_t^{(k)}:M\to R$ is the (positive) contact Hamiltonian given by
\begin{equation} \label{k-iterate}
f_t^{(k)} = \varepsilon + h_t^{(k)}\circ \left( \phi_{\varepsilon t}^R\right)^{-1}. 
\end{equation}
Importantly, observe that we can find a sequence $\{f_t^{(k_i)}\}_{i \in \N}$ where $k_1 = 1$ such that for every $t \in [0,1]$, we have 
\[ f_t  < f_t^{(k_2)} < f_t^{(k_3)} < \cdots. \]
Indeed, for instance, in order to obtain $k_2$, we need to compare $h_t$ with $kh_{kt}$. Since our manifold $M$ is compact and the time interval $[0,1]$ is also compact, there exists a global maximum of $h_t$ on $M$ over $t \in [0,1]$. Then take $k_2$ sufficiently large so that $\min_{t \in [0,1]} k_2h_{k_2 t}$ on $M$ is larger than this global maximum. Inductively, we obtain all the desired $k_i$.  By our assumption, the continuation map ${\rm HF}_*(f^{(k_i)}) \to {\rm HF}_*(f^{(k_{i+1})})$ is an isomorphism for all $i \in \N$. Therefore, we have successive isomorphisms, 
\begin{equation} \label{suc-iso}
{\rm HF}_*(\ep)  \simeq {\rm HF}_*(f^{(k_1)}) \simeq {\rm HF}_*(f^{(k_2)}) \simeq {\rm HF}_*(f^{(k_3)}) \simeq \cdots,
\end{equation}
Moreover, we have $\displaystyle \lim_{\longrightarrow} {\rm HF}_*(f^{(k_i)})= {\rm SH}_\ast(W)$ (see Section 4 in \cite{Igor-Merry}). This implies that the canonical map
\[{\rm HF}_*(\varepsilon)\to {\rm SH}_*(W)\]
is an isomorphism. By the last equality in \cite{Igor-Merry}, the contact Floer homology ${\rm HF}_*(\varepsilon)$ is isomorphic to ${\rm H}_{*+n}(W, \partial W)$. Therefore, we obtain the desired contradiction and this finishes the proof.\qed

\section{Proof of Theorem \ref{thm-pos-contr} and examples} \label{sec-proof}

The proof of Theorem \ref{thm-pos-contr} is rather similar to the proof of Theorem \ref{thm:bifvscont}. 

\subsection{Proof of Theorem \ref{thm-pos-contr}}  Assume there exists a positive loop of contactomorphisms $\phi_t: \partial W\to \partial W$ for $t \in [0,1]$ that is contractible through non-negative loops of contactomorphisms. In other words, there exists an $s$-family of contact Hamiltonians $h^s: [0,1] \times \partial W\to [0,+\infty)$ such that $h^s$ generates a loop of contactomorphisms $\phi_t^{s}:\partial W\to \partial W$ for all $s\in[0,1]$ and such that $h:=h^1$ is the contact Hamiltonian of the loop $\{\phi_t\}_{t\in[0,1]}$. Denote by $\{\phi^g_t\}_{t \in [0,1]}$ the contact Hamiltonian flow generated by $g$. Similarly to (\ref{cocycle}), the composition $\{\phi_{t}^g \circ \phi_t^s\}_{t \in [0,1]}$ is generated by the contact Hamiltonian 
\begin{equation} \label{ch-comp}
f_t^s := g + h_t^s \circ (\phi^g_{t})^{-1}. 
\end{equation}
For any point $x \in \partial W$, since $g$ is admissible and since $\phi_1^s = \mathds{1}$, the identity map on $\partial W$, by definition we have 
\begin{equation} \label{noreeb-comp}
x \neq \phi^g_{1}(x) = (\phi_{1}^g \circ \phi_1^s)(x).
\end{equation}
In other words, the composition flow $\{\phi_{t}^g \circ \phi^s_t\}_{t \in [0,1]}$ has no $1$-periodic orbits for all $s\in[0,1]$. Hence, the contact Hamiltonian $f^s$ is admissible for all $s\in [0,1]$. Different from the proof of Theorem \ref{thm:bifvscont}, here since $h_t^s$ is positive, we have $0< f^0= g \leq f^s_t$ pointwise for all $s \in [0,1]$. Therefore, Theorem \ref{thm:contisiso} applies and yields an isomorphism
\[{\rm HF}_\ast(g)= {\rm HF}_*(f^0)\to {\rm HF}_*(f^1)= {\rm HF}_\ast(f),\]
importantly, induced by the continuation map. Similarly to the proof of Theorem \ref{thm:bifvscont}, consider the $k$-th iterate of $\phi$ which is generated by the contact Hamiltonian $h^{(k)}_t:= k\cdot h_{kt}$. By the same argument as above, we can find a sequence $\{f_t^{(k_i)}\}_{i \in \N}$ where $k_1 = 1$ such that for every $t \in [0,1]$, we have 
\[ f_t  < f_t^{(k_2)} < f_t^{(k_3)} < \cdots. \]
This implies successive isomorphisms, 
\begin{equation} \label{suc-iso}
{\rm HF}_*(g)  = {\rm HF}_*(f^{(k_1)}) \simeq {\rm HF}_*(f^{(k_2)}) \simeq {\rm HF}_*(f^{(k_3)}) \simeq \cdots,
\end{equation}
where all the isomorphisms are induced by the continuation maps. The rest of the proof goes in the same manner as the end of the proof of Theorem \ref{thm:bifvscont}. The isomorphism ${\rm HF}_*(g) \simeq {\rm SH}_*(W)$ provides the desired contradiction. \qed

\subsection{Examples} \label{sec-ex} 
In this section, we list some examples in which one can easily verify that ${\rm HF}_\ast(g)$ and ${\rm SH}_\ast(W)$ from Theorem~\ref{thm-pos-contr} are not isomorphic. Recall that if $\ep>0$ is sufficiently small, the groups ${\rm HF}_*(\ep)$ and ${\rm H}_{*+n}(W, \partial W)$ are isomorphic. Therefore, the non-equality (up to isomorphism) of ${\rm H}_{*+n}(W, \partial W)$ and ${\rm SH}_*(W)$ initiates the application of Theorem  {\ref{thm-pos-contr}}. Interestingly, comparing ${\rm H}_{*}(W, \partial W)$ and ${\rm SH}_*(W)$ is also fundamentally important when one verifies the Weinstein conjecture for $\partial W$ in \cite{Viterbo99}. Our examples come from the two extreme situations where either ${\rm SH}_\ast(W)=0$ or ${\rm SH}_\ast(W)$ is infinite dimensional. In the latter case, even a stronger conclusion than that of Theorem~\ref{thm-pos-contr} holds: there are no contractible positive loops of contactomorphisms on $\partial W$ if ${\rm SH}_\ast(W)$ is infinite dimensional (see Theorem~1.5 in \cite{Igor-Merry}). 

\begin{Ex} \label{ex-1} Suppose $W$ is a compact symplectic manifold of dimension $2n$ with boundary. Recall that $W$ is called $\bf k$-orientable if $W - \partial W$ is a $\bf k$-oriented manifold without boundary. Then Lemma 3.27 in \cite{Hatcher-book} shows that ${\rm H} _{2n}(W; \partial W; \bf k)$ admits a fundamental class. Hence, in particular, ${\rm H}_*(W; \partial W; {\bf k}) \neq 0$. 

On the other hand, the vanishing of the entire ${\rm SH}_*(W) = {\rm SH}_*(W; \bf k)$ appears quite often in symplectic geometry and contact geometry. Here we give several examples. By \cite{Cieliebak-handle-JEMS,Fau-20} and the basic computation of ${\rm SH}_*(B^{2n}(R); \bf k)$, any subcritical Stein domain $W$ has its ${\rm SH}_*(W;\bf k) =0$. By Corollary 6.5 in \cite{Seidel-bias-SH}, any Liouville filling $W$ of $(S^{2n-1}, \xi_{\rm std})$ has ${\rm SH}_*(W; {\bf k}) = 0$. By Theorem A.1 in \cite{Kang-displace} or Theorem 13.4 in \cite{Ritter-TQFT}, if a Liouville domain $W$ is displaceable in its symplectic completion, then ${\rm SH}_*(W; {\bf k})=0$. In fact in  \cite{Ritter-TQFT}, several algebraic criterions are provided to guarantee the vanishing of ${\rm SH}_*(W; \bf k)$, see its Section 10 and Theorem 13.3. For these cases, we have $0 = {\rm SH}_*(W; {\bf k}) \neq {\rm H}_*(W, \partial W; {\bf k}) \neq 0$.\end{Ex}

\begin{Ex} \label{ex-2} For $W = D^*Q$, the unit codisk bundle of a closed manifold $Q$, a well-known result by Viterbo \cite{Viterbo99,Abbondandolo-Schwarz-06,AS14,Abouzaid-symplectic-book,Salamon-Weber}, says that ${\rm SH}_*(D^*Q; {\bf k}) \simeq {\rm H}_*(\Lambda Q; {\bf k})$ when $Q$ satisfies some topological condition (otherwise one can use twisted coefficients), where $\Lambda Q$ is the free loop space. Then, by a theorem in \cite{Vigue-Poirrier-Sullivan} (see also Remark 1.1 in \cite{Alber-Frauenfelder-LWI-CB}), ${\rm dim}_{\bf k} {\rm H}_*(\Lambda Q; {\bf k}) = \infty$ if $\pi_1(Q)$ is finite. In particular, in this case, ${\rm SH}_*(W; {\bf k}) \neq {\rm H}_*(W, \partial W; {\bf k})$ since ${\rm dim}_{\bf k} {\rm H}_*(W, \partial W; {\bf k}) = {\rm dim}_{\bf k}{\rm H}_*(D^*Q, S^*Q; {\bf k})$ is always bounded. \end{Ex}

\begin{Ex} \label{ex-3} For a contact manifold $M$ obtained from smoothing the codimension two corners of the boundary of a certain Lefschetz fibration (see the beginning of \cite{Albers-McLean-LWI}), any strong symplectic filling $W$ of $M$ such that the pull-back $H^1(W)\to H^1(M)$ of the inclusion $M\hookrightarrow W$ is surjective has ${\rm SH}_*(W; {\bf k})$ infinite dimensional (see Theorem 1.2 in {\cite{Albers-McLean-LWI}}). By the same reason as in Example \ref{ex-2}, we have ${\rm SH}_*(W; {\bf k}) \neq {\rm H}_*(W, \partial W; {\bf k})$.\end{Ex}

\section{Technical lemmas} \label{sec-lemmas}

In this section, we provide detailed proofs of the lemmas that appeared in the proof of Theorem \ref{thm:bifiso}. The next lemma was used in the proof of Theorem~\ref{thm:bifiso} to show that changing a Hamiltonian on $\hat{W}$ far into a conical end does not affect its flow in a  chosen compact subset.

\begin{Lemma}\label{lem:investimate}
Let $\Sigma$ be a smooth manifold, let $X_t$, $t\in\R$ be a time-dependent smooth vector field on $\Sigma\times\R^+$ and let $\gamma:[0,a]\to\Sigma\times\R^+$ be an integral curve of $X$ such that $\gamma(0)\in\Sigma\times\{r_0\}$ and $\gamma(a)\in\Sigma\times\{r_1\}$. Denote by $\pi:\Sigma\times\R^+\to\R^+$ the projection and by $g$ the Riemannian metric on $\R^+$ given by $g:=\frac{dr\otimes dr}{r^2}$. Then,
\[r_0\cdot e^{-a\cdot C}\leq r_1\leq r_0\cdot e^{a\cdot C},\]
where $C:= \sup_{p\in\Sigma\times\R^+, \,t\in \R}\norm{d\pi(X_t(p))}_g.$
\end{Lemma}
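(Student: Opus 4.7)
The plan is to reduce the statement to a bound on the $g$-length of the projected curve $\pi \circ \gamma$. The Riemannian metric $g = dr\otimes dr/r^2$ on $\R^+$ has the feature that the induced distance function admits an explicit logarithmic form, and the constant $C$ is precisely the sup of the pointwise $g$-speed of the $\pi$-components of the vector field $X_t$. Combining these two observations directly yields the desired exponential bound.

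First I would compute the $g$-distance on $\R^+$: for any $r_0,r_1\in\R^+$, a minimizing geodesic is a monotone segment, so
\[
d_g(r_0, r_1) = \left| \int_{r_0}^{r_1} \frac{dr}{r} \right| = \abs{\ln(r_1/r_0)}.
\]
Next I would estimate the $g$-length of the projected curve $\pi\circ\gamma:[0,a]\to\R^+$. Since $\gamma$ is an integral curve of $X_t$, its velocity satisfies $\dot\gamma(t) = X_t(\gamma(t))$, and therefore $(\pi\circ\gamma)'(t) = d\pi(X_t(\gamma(t)))$. The hypothesis on $C$ gives $\|(\pi\circ\gamma)'(t)\|_g \leq C$ for all $t$, so
\[
\mathrm{length}_g(\pi\circ\gamma) = \int_0^a \norm{d\pi(X_t(\gamma(t)))}_g\, dt \leq a\cdot C.
\]

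Finally, since the $g$-distance between the endpoints of a curve is bounded above by its $g$-length, I would combine the two displays above to obtain
\[
\abs{\ln(r_1/r_0)} = d_g(r_0, r_1) \leq \mathrm{length}_g(\pi\circ\gamma) \leq a\cdot C,
\]
which rearranges to $r_0\cdot e^{-a\cdot C} \leq r_1 \leq r_0\cdot e^{a\cdot C}$, as required. There is no real obstacle here — the only subtlety is recognizing that the metric $g$ is tailor-made so that balls on the conical coordinate $r$ grow multiplicatively, which turns an additive length bound into the multiplicative exponential bound stated in the lemma.
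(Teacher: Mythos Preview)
Your proof is correct and follows essentially the same argument as the paper: compute the $g$-distance on $\R^+$ as $\abs{\ln(r_1/r_0)}$, bound it by the $g$-length of the projected curve $\pi\circ\gamma$, and use $\norm{(\pi\circ\gamma)'(t)}_g = \norm{d\pi(X_t(\gamma(t)))}_g \leq C$ to get the length bound $a\cdot C$.
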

\begin{proof}
The distance between $r_0$ and $r_1$ in $\left( \R^+, g \right)$ is equal to $\abs{\ln r_1 - \ln r_0}$. Since $\pi\circ\gamma$ is a smooth curve that joins $r_0$ and $r_1$,
\begin{align*}
    \abs{\ln \frac{r_1}{r_0}} \leq \op{length}(\pi\circ\gamma)& = \int_0^a \norm{\frac{d}{dt}(\pi\circ\gamma(t))}_g dt \\
    & = \int_0^a \norm{d\pi (\gamma'(t))}_g dt = \int_0^a \norm{d\pi (X_t(\gamma(t)))}_g dt \leq a\cdot C.
\end{align*}
Therefore, we have $r_0\cdot e^{-a\cdot C}\leq r_1\leq r_0\cdot e^{a\cdot C}$, as desired.
\end{proof}

The following three lemmas (Lemma~\ref{lem:gronwall}, Lemma~\ref{lem:gronapp}, and Lemma~\ref{lem:existsmu}) prove that one can interpolate between $r\cdot h^0$ and $r\cdot h^1$ on a conical end (in the situation of Theorem~\ref{thm:contisiso}) without creating 1-periodic orbits. This fact is crucial in the proof of Theorem~\ref{thm:contisiso}. Lemma~\ref{lem:gronwall} proves a Gronwall-type estimate that is used later in Lemma~\ref{lem:gronapp} and, indirectly, in Lemma~\ref{lem:existsmu}. 

\begin{Lemma}\label{lem:gronwall}
Let $(M,g)$ be a Riemannian manifold and let $\nabla$ be its Levi-Civita connection. Let $X_t, Y_t, t\in\R$ be two smooth time-dependent vector fields. Let $u:[0,1]\times[0, t_0]\to M$ be a smooth map with the following property: $t\mapsto u(s,t)$ is an integral curve of the time dependent vector field $(1-s)\cdot X_t + s\cdot Y_t$ for all $s\in[0,1]$, i.e.
\[\partial_t u(s,t) = (1-s)\cdot X_t(u(s,t)) + s\cdot Y_t(u(s,t))\]
for all $(s,t)\in[0,1]\times[0,t_0]$. Denote by $d$ the metric on $M$ induced by $g$, and denote
\begin{align*}
    &\norm{X}_u:=\underset{t\in \R}{\sup_{p\in\:\op{im} u}}\norm{X_t(p)}_g:=\underset{t\in \R}{\sup_{p\in\:\op{im} u}}\sqrt{g(X_t(p), X_t(p))},\\
    &\norm{\nabla X}_u:=\underset{t\in \R}{\sup_{p\in\:\op{im} u}}\norm{\nabla X_t(p)}_g :=\underset{t\in \R}{\sup_{p\in\:\op{im} u}}\sup_{v\not=0}\frac{\norm{(\nabla_v X_t)(p)}_g}{\norm{v}_g}.
\end{align*}
Then,
\[d\big(u(0,t), u(1,t)\big)\leq \alpha(t)\cdot e^{\beta(t)}\]
for all $t\in[0,t_0]$, where
\begin{align*}
    \alpha(t)&= \sqrt{\int_0^1 \norm{\partial_s u(s,0)}_g^2ds + t\cdot \norm{Y-X}^2_u},\\
    \beta(t)&= t\cdot \left( \max\{ \norm{\nabla X}_u, \norm{\nabla Y}_u \}+\frac{1}{2}\right).
\end{align*}
\end{Lemma}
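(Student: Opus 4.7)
My plan is to work with the \emph{energy} of the interpolating $s$-curve,
\[ E(t) := \int_0^1 \norm{\partial_s u(s,t)}_g^2\, ds, \]
and bound it via a Gronwall-type argument. Since $s\mapsto u(s,t)$ is a smooth curve from $u(0,t)$ to $u(1,t)$, its length bounds the distance $d(u(0,t),u(1,t))$, and Cauchy--Schwarz gives $d(u(0,t),u(1,t))^2\leq E(t)$. Hence it suffices to establish $E(t)\leq \alpha(t)^2 e^{2\beta(t)}$.

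To obtain a differential inequality for $E$, I would differentiate under the integral sign and use metric compatibility of $\nabla$ together with the torsion-freeness identity $\nabla_t\partial_s u=\nabla_s\partial_t u$, yielding
\[ \frac{dE}{dt} = 2\int_0^1 g(\nabla_s\partial_t u,\partial_s u)\,ds. \]
Substituting the ODE $\partial_t u=(1-s)X_t|_u + sY_t|_u$ and applying the product rule in $s$, together with the identity $\nabla_s(Z|_u)=\nabla_{\partial_s u}Z$ for any smooth vector field $Z$, gives
\[ \nabla_s\partial_t u = (Y_t-X_t)|_u + (1-s)\nabla_{\partial_s u}X_t + s\nabla_{\partial_s u}Y_t. \]
Applying Cauchy--Schwarz pointwise with the definitions of $\norm{Y-X}_u$, $\norm{\nabla X}_u$, $\norm{\nabla Y}_u$, and using the elementary inequality $2\norm{Y-X}_u\cdot\norm{\partial_s u}_g\leq \norm{Y-X}_u^2 + \norm{\partial_s u}_g^2$ to linearize the cross term, I would then integrate in $s$ to obtain
\[ \frac{dE}{dt}\;\leq\; \norm{Y-X}_u^2 + 2\beta'(t)\,E(t), \]
noting that $2\beta'(t)=2\max\{\norm{\nabla X}_u,\norm{\nabla Y}_u\}+1$ by the very definition of $\beta$.

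The final step is the integrating-factor trick: since $\beta(t)\geq 0$, multiplying the inequality above by $e^{-2\beta(t)}$ gives $\left(E\cdot e^{-2\beta}\right)'\leq \norm{Y-X}_u^2\cdot e^{-2\beta(t)}\leq \norm{Y-X}_u^2$, and integrating from $0$ to $t$ yields
\[ E(t)\,e^{-2\beta(t)}\;\leq\; E(0)+t\cdot\norm{Y-X}_u^2 \;=\; \alpha(t)^2. \]
Taking square roots and combining with the reduction from the first paragraph gives the claimed bound $d(u(0,t),u(1,t))\leq \alpha(t)e^{\beta(t)}$.

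The main obstacle will be the careful derivation of the formula for $\nabla_s\partial_t u$: one must treat $X_t$ and $Y_t$ consistently as vector fields along the map $u$, correctly apply the product rule to the scalar coefficients $(1-s)$ and $s$, and interpret the covariant $s$-derivative of $X_t|_{u(s,t)}$ as $\nabla_{\partial_s u}X_t$. Once that identity is in hand, every remaining step---Cauchy--Schwarz, the $2ab\leq a^2+b^2$ trick, and the integrating-factor manipulation---is entirely routine.
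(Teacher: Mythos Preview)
Your proposal is correct and follows essentially the same route as the paper: define the energy $E(t)=\int_0^1\norm{\partial_s u}^2\,ds$, use $\nabla_t\partial_s u=\nabla_s\partial_t u$ to compute $E'(t)$, expand $\nabla_s\partial_t u$ via the product rule into the $(Y-X)$ term and the $\nabla_{\partial_s u}Z$ term, bound the first with $2ab\le a^2+b^2$ and the second with $\max\{\norm{\nabla X}_u,\norm{\nabla Y}_u\}$, and then apply Gr\"onwall. The only cosmetic difference is that the paper writes the argument in integral form (Newton--Leibniz for $e(a)-e(0)$, then the integral Gr\"onwall inequality) while you use the equivalent differential form with an integrating factor; the estimates and constants are identical.
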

\begin{proof}
We adapt the proofs of Proposition~1.1 and Theorem~1.2 in \cite{kunzinger2006global}. Denote by $\ell(t)$ and $e(t)$ the length and the energy of the curve
\[[0,1]\to M\quad:\quad s\mapsto u(s,t),\]
respectively. In other words,
\begin{equation*}
    \ell(t):= \int_0^1\norm{\partial_s u(s,t)}_g ds \,\,\,\,\mbox{and}\,\,\,\,\,e(t):=\int_0^1\norm{\partial_s u(s,t)}_g^2 ds.
\end{equation*}
By the Cauchy-Schwarz inequality, $\ell(t)\leq\sqrt{e(t)}$. The Newton-Leibniz formula, together with $\nabla_{\partial_t}\partial_s u= \nabla_{\partial_s}\partial_t u,$ implies
\begin{align*}
    e(a)- e(0) &= \int_0^a \partial_t e(t) dt\\
    &= \int_0^a \frac{d}{dt} \int_0^1\norm{\partial_s u(s,t)}_g^2 ds dt\\
    &= \int_0^a \int_0^1 \frac{d}{dt} g\left(\partial_s u(s,t), \partial_s u(s,t)\right) ds dt\\
    &= \int_0^a \int_0^1 2\cdot g\left(\nabla_{\partial_t}\partial_s u(s,t), \partial_s u(s,t)\right) ds dt\\
    &= 2\cdot\int_0^a \int_0^1 g\left(\nabla_{\partial_s}\partial_t u(s,t), \partial_s u(s,t)\right) ds dt.
\end{align*}
Denote $Z_t^s:=(1-s)\cdot X_t + s\cdot Y_t$. Since
\begin{align*}
    \nabla_{\partial_s}\partial_t u(s,t) &= \left( \partial_s Z^s_t \right)\circ u(s,t) + \left( \nabla_{\partial_s u(s,t)} Z_t^s \right)\circ u(s,t)\\
    &= (Y_t-X_t)\circ u(s,t) + \left( \nabla_{\partial_s u(s,t)} Z_t^s \right)\circ u(s,t),
\end{align*}
the triangle inequality implies
\[e(a)-e(0)\leq A+ B,\]
where
\begin{align*}
    & A:= 2\cdot\abs{\int_0^a \int_0^1 g\left((Y_t-X_t)\circ u(s,t), \partial_s u(s,t)\right) ds dt},\\
    & B:= 2\cdot\abs{\int_0^a \int_0^1 g\left(\left( \nabla_{\partial_s u(s,t)} Z_t^s \right)\circ u(s,t), \partial_s u(s,t)\right) ds dt}.
\end{align*}
The Cauchy-Schwarz inequality implies
\begin{align*}
    A &\leq 2\cdot\int_0^a\sqrt{\int_0^1 \norm{\left(Y_t-X_t\right)\circ u(s,t)}_g^2 ds\cdot \int_0^1 \norm{\partial_s u(s,t)}_g^2 ds  } dt\\
    &\leq \int_0^a 2\cdot\norm{Y-X}_u\cdot \sqrt{e(t)}dt.
\end{align*}
By the inequality between arithmetic and geometric means,
\[ 2\cdot\norm{Y-X}_u\cdot \sqrt{e(t)}\leq \norm{Y-X}_u^2 + e(t).  \]
Hence,
\[ A\leq \int_0^a \left( \norm{Y-X}_u^2 + e(t) \right) dt = a\cdot \norm{Y-X}_u^2 + \int_0^a e(t) dt. \]
Now, we apply the Cauchy-Schwarz inequality to the term $B$
\[ B\leq 2\cdot\int_0^a\sqrt{\int_0^1\norm{\left(\nabla_{\partial_s u(s,t)}Z^s_t\right)\circ u(s,t)}_g^2ds\cdot \int_0^1 \norm{\partial_s u(s,t)}_g^2ds} dt. \]
Since
\begin{align*}
    \norm{\left(\nabla_{\partial_s u(s,t)}Z^s_t\right)\circ u(s,t)}_g &\leq \norm{\left(\nabla Z^s_t\right)\circ u(s,t)}_g \cdot \norm{\partial_s u(s,t)}_g\\
    &\leq \norm{\nabla Z^s}_u\cdot \norm{\partial_s u(s,t)}_g\\
    &= \norm{ (1-s)\cdot \nabla X + s\cdot \nabla Y }_u\cdot \norm{\partial_s u(s,t)}_g\\
    &\leq \max\{ \norm{\nabla X}_u, \norm{\nabla Y}_u \}\cdot\norm{\partial_s u(s,t)}_g,
\end{align*}
the following inequality holds
\begin{align*}
    B&\leq 2\cdot \max\{ \norm{\nabla X}_u, \norm{\nabla Y}_u \}\cdot \int_0^a\int_0^1\norm{\partial_s u(s,t)}^2_g ds dt\\
    &= 2\cdot \max\{ \norm{\nabla X}_u, \norm{\nabla Y}_u \}\cdot \int_0^a e(t) dt.
\end{align*}
Consequently,
\begin{align*}
    e(a)-e(0)&\leq A+B\\
    &\leq a\norm{Y-X}^2_u + \int_0^a e(t) dt + 2\max\{ \norm{\nabla X}_u, \norm{\nabla Y}_u \}\cdot \int_0^a e(t) dt\\
    &\leq a\norm{Y-X}^2_u + \bigg(2\max\{ \norm{\nabla X}_u, \norm{\nabla Y}_u \}+1\bigg)\cdot \int_0^a e(t) dt.
\end{align*}
The Gr\"onwall inequality implies
\[ e(t)\leq \left( e(0) + t\norm{Y-X}^2_u \right)\cdot \exp \bigg({t\cdot\left( 2\max\{ \norm{\nabla X}_u, \norm{\nabla Y}_u \}+1\right)}\bigg) \]
for $t\in[0,t_0]$. Therefore,
\[\ell(t)\leq \sqrt{e(0) + t\norm{Y-X}^2_u }\cdot \exp \bigg({t\cdot\left( \max\{ \norm{\nabla X}_u, \norm{\nabla Y}_u \}+\frac{1}{2}\right)}\bigg) \]
for $t\in[0,t_0]$. The distance between $u(0,t)$ and $u(1,t)$ is not greater than $\ell(t)$. Hence,
\[d\big(u(0,t), u(1,t)\big)\leq \alpha(t)\cdot e^{\beta(t)}\]
for all $t\in[0,t_0]$, where
\begin{align*}
    \alpha(t)&= \sqrt{\int_0^1 \norm{\partial_s u(s,0)}^2ds + t\cdot \norm{Y-X}^2_u},\\
    \beta(t)&= t\cdot \left( \max\{ \norm{\nabla X}_u, \norm{\nabla Y}_u \}+\frac{1}{2}\right).
\end{align*}
Thus we complete the proof.  \end{proof}

The next lemma applies Lemma~\ref{lem:gronwall} to Hamiltonian vector fields on the symplectization $M \times\R^+$ of a contact manifold $M$.

\begin{Lemma}\label{lem:gronapp}
Let $M$ be a closed contact manifold with a fixed contact 1-form and let $h^s: [0,1] \times M \to\R, s\in[0,1]$ be a smooth $s$-family of time-dependent contact Hamiltonians. Denote by $G^s: [0,1] \times M \times\R^+\to\R$ the Hamiltonian (on the symplectization of $M$) given by 
\[G^s_t(x,r):= r\cdot h_t^s(x).\]
Let $\mu:\R^+\to [0,1]$ be a smooth function such that $\abs{r\cdot \mu'(r)}<1$ for all $r\in\R^+$ and let $H^s: [0,1] \times M\times\R^+\to \R$ be the Hamiltonian defined by
\[H^s_t(x,r):= r\cdot h^{s\cdot \mu(r)}_t(x).\]
Let $g_M$ be a Riemannian metric on $M$ and let $g$ be the Riemannian metric on $M\times\R^+$ given by
\[g:= g_M+ \frac{dr\otimes dr}{r^2}.\]
Denote by $d$ the metric on $M\times\R^+$ induced by $g$ and by $\phi_t^s, \psi_t^s$ the Hamiltonian isotopies of $H^s$ and $G^s$, respectively. Then, there exist constants $k_1, k_2\in\R^+$ that are independent of the function $\mu$ such that
\[ d\left( \phi_t^s(x,r), \psi_t^{s\mu(r)}(x,r)\right)\leq k_1\cdot \sqrt{t\cdot \max_{r\in\R^+}\abs{r\mu'(r)}}\cdot e^{k_2\cdot t\cdot \left( \underset{r\in\R^+}{\max}\:\abs{r^2\mu''(r)} +1 \right)}\]
for $(x,r)\in M\times \R^+$ and $t\in\R.$
\end{Lemma}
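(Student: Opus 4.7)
The plan is to apply the Gronwall-type estimate of Lemma~\ref{lem:gronwall} to the pair $X_t:=X_{H^s_t}$ and $Y_t:=X_{G^{s\mu(r_0)}_t}$ on $M\times\R^+$ for a fixed basepoint $(x,r_0)$. For each $\sigma\in[0,1]$, let $u(\sigma,\cdot)$ be the integral curve of the interpolation $(1-\sigma)X_t+\sigma Y_t$ starting at $(x,r_0)$. Then $u(0,t)=\phi^s_t(x,r_0)$, $u(1,t)=\psi^{s\mu(r_0)}_t(x,r_0)$, and $\partial_\sigma u(\sigma,0)\equiv 0$, so Lemma~\ref{lem:gronwall} reduces to
\[d\bigl(\phi^s_t(x,r_0),\psi^{s\mu(r_0)}_t(x,r_0)\bigr)\leq\sqrt{t}\,\norm{X-Y}_u\cdot e^{t(\max\{\norm{\nabla X}_u,\norm{\nabla Y}_u\}+1/2)}.\]
The remaining task is to bound $\norm{X-Y}_u$ by a constant times $\max|r\mu'|$ and $\max\{\norm{\nabla X}_u,\norm{\nabla Y}_u\}$ by a constant times $\max|r^2\mu''|+1$, with all constants independent of $(x,r_0)$ and $\mu$.

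For the first bound, I would write out explicitly the contact Hamiltonian vector field on the symplectization and observe that $X_{H^s_t}(y,r)-X_{G^{s\mu(r_0)}_t}(y,r)$ decomposes as a Reeb, horizontal, and $\partial_r$ component, each of whose $g$-norm is controlled by either $|\mu(r)-\mu(r_0)|$ or $|r\mu'(r)|$ (the latter arising from the $r$-dependence of the slope in $H^s_t=r\cdot h^{s\mu(r)}_t$). The substitution $\rho=e^\tau$ yields $|\mu(r)-\mu(r_0)|\leq|\ln r-\ln r_0|\cdot\max_\rho|\rho\mu'(\rho)|$, and Lemma~\ref{lem:investimate}, applied to the interpolating vector field (whose $\partial_r$-component divided by $r$ is uniformly bounded by $\max|dh^\sigma_t(R)|$), yields $|\ln r-\ln r_0|\leq C_1 t$ on $\op{im} u$. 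Altogether this furnishes $\norm{X-Y}_u\leq C_2(1+t)\max|r\mu'|$, with $C_2$ depending only on the $C^1$-norm of the family $h^\sigma_t$ and on $g_M$.

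For the covariant derivative bound, the decisive observation is that the metric $g$ and the family $X_{G^\sigma_t}$ are both invariant under the Liouville rescalings $L_c\colon(x,r)\mapsto(x,cr)$: the relations $L_c^*g=g$, $L_c^*\omega=c\omega$ and $L_c^*G^\sigma_t=cG^\sigma_t$ together imply $(L_c)_*X_{G^\sigma_t}=X_{G^\sigma_t}$, so the function $(x,r)\mapsto\norm{\nabla X_{G^\sigma_t}(x,r)}_g$ is $L_c$-invariant and hence bounded by its supremum on $M\times\{1\}$. For $\nabla X_{H^s_t}$, I would decompose $X_{H^s_t}(y,r)=X_{G^{s\mu(r)}_t}(y,r)+E(y,r)$, where $E(y,r)=-rs\mu'(r)\,\partial_\sigma h^\sigma_t|_{s\mu(r)}(y)\cdot R(y)$. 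Differentiating the first summand produces the scale-invariant $\nabla X_{G^\sigma_t}$ at $\sigma=s\mu(r)$, plus a $dr$-term $\partial_\sigma X_{G^\sigma_t}\cdot s\mu'(r)\,dr$ whose operator norm in $g$ is $\leq C\max|r\mu'|$ since $|dr|_{g^*}=r$. Writing $E=fR$, the $\partial_r$-derivative of $f$ contains the term $rs\mu''(r)\,\partial_\sigma h^\sigma_t$, whose contribution to $r|\partial_r f|$ is of order $\max|r^2\mu''|$; all other terms of $df$ are bounded by $\max|r\mu'|+(\max|r\mu'|)^2$. Consequently $\max\{\norm{\nabla X}_u,\norm{\nabla Y}_u\}\leq C_3(\max|r^2\mu''|+1)$.

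Plugging the two bounds into the Gronwall estimate yields
\[d\leq C_2\sqrt{t}\,(1+t)\max|r\mu'|\cdot\exp\bigl(C_4\,t(\max|r^2\mu''|+1)\bigr).\]
Using $(1+t)\leq e^t$ to absorb the polynomial factor and the standing hypothesis $\max|r\mu'|\leq 1$ to replace $\max|r\mu'|$ by $\sqrt{\max|r\mu'|}$ transforms this into the desired inequality with constants $k_1,k_2$ independent of $\mu$. The main obstacle will be the careful bookkeeping in the covariant derivative of $X_{H^s_t}$---in particular, tracking how the $r$-derivative of $\sigma=s\mu(r)$ interacts with the Liouville scaling built into $g$ and with the $r$-weighting in $H^s_t=r\cdot h^{s\mu(r)}_t$---but the scale-invariance of $X_{G^\sigma_t}$ together with compactness of $M$ keep every constant uniform in the basepoint.
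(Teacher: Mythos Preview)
Your outline is correct and shares the paper's overall strategy: feed explicit $C^0$ and $C^1$ bounds on the relevant Hamiltonian vector fields into the Gronwall estimate of Lemma~\ref{lem:gronwall}. The substantive difference lies in the choice of the second vector field. The paper compares $X^{H^s_t}$ with the $r$-dependent field $(y,\rho)\mapsto X^{G^{s\mu(\rho)}_t}(y,\rho)$, so that the difference is exactly the error term $V^s_t=s\rho\mu'(\rho)\,(\partial_s h)^{s\mu(\rho)}_t\,R$ and the bound $\norm{X-Y}_u\leq C\max\abs{r\mu'}$ is immediate. You instead freeze the slope parameter at the initial value $\sigma_0=s\mu(r_0)$; this has the virtue that $u(1,t)$ is literally $\psi^{\sigma_0}_t(x,r_0)$, but you must then also control $X^{G^{s\mu(\rho)}_t}-X^{G^{\sigma_0}_t}$ over $\op{im}u$, which your mean-value estimate in $\ln r$ combined with Lemma~\ref{lem:investimate} does at the cost of a harmless $(1+t)$ factor. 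For the covariant-derivative bounds, your Liouville-rescaling argument for $\norm{\nabla X^{G^\sigma_t}}$ is a pleasant conceptual alternative to the paper's direct coordinate computation in Step~2; both yield the same uniform constants. Your final bookkeeping ($(1+t)\leq e^t$ and $a\leq\sqrt a$ for $a\in[0,1]$) is legitimate, and the standing hypothesis $\abs{r\mu'(r)}<1$ is used exactly where you indicate.
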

\begin{proof} The proof will be divided into the following three steps. 
\medskip

\noindent \textbf{Step~1} (Global definiteness). This step proves that the Hamiltonians $H^s$ and $G^s$ have globally defined Hamiltonian isotopies. Since $g$ is a complete metric on $M \times \R^+$, in the view of Theorem~1.1 on page~179 in \cite{hirsch2012differential}, it is enough to show that the vector fields of $G^s$ and $H^s$ are bounded with respect to $g$. Denote by $Y^s_t$ the contact vector field on $M$ furnished by the contact Hamiltonian $h^s_t$. In other words, $Y_t^s$ is the vector field on $M$ characterized by
\begin{equation*}
    \alpha (Y^s_t)=-h^s_t \,\,\,\,\mbox{and}\,\,\,\, d\alpha(Y^s_t, \cdot )= dh^s_t - dh_t^s(R)\cdot \alpha,
\end{equation*}
where $\alpha$ is the fixed contact 1-form on $M$ and $R$ is the associated Reeb vector field. The Hamiltonian vector fields of $G^s$ and $H^s$ are given by
\begin{align*}
    & X^{G^s_t}(x,r)= Y_t^s(x) + rdh_t^s(x)(R)\partial_r,\\
    & X^{H^s_t}(x,r)= Y_t^{s\mu(r)}(x) + rdh_t^{s\mu(r)}(x)(R)\partial_r - sr\mu'(r)(\partial_s h)^{s\mu(r)}_t(x)\cdot R(x).
\end{align*}
Since we have the following estimations, 
\begin{align*}
    \norm{X^{G_t^s}(x,r)}\leq& \underset{s,t\in[0,1]}{\max_{x\in \Sigma}}\norm{Y_t^s(x)}_{g_\Sigma} + \underset{s,t\in[0,1]}{\max_{x\in \Sigma}} \abs{d h^s_t(x)(R)},\\
    \norm{X^{H^s_t}(x,r)}\leq& \underset{s,t\in[0,1]}{\max_{x\in \Sigma}}\norm{Y_t^s(x)}_{g_\Sigma} + \underset{s,t\in[0,1]}{\max_{x\in \Sigma}} \abs{d h^s_t(x)(R)} \\
    &+ \underset{s,t\in[0,1]}{\max_{x\in \Sigma}}\abs{(\partial_s h)^s_t(x)}\cdot \max_{x\in\Sigma} \norm{R(x)}_{g_\Sigma},
\end{align*}
the Hamiltonians $G^s$ and $H^s$ have globally defined Hamiltonian isotopies.

\medskip

\noindent \textbf{Step~2} ($C^1$ bounds). This step estimates the norms $\norm{\nabla X^{G^s_t}(x,r)}$ and $\norm{\nabla X^{H^s_t}(x,r)}$ of the linear maps $v \mapsto \nabla_v X^{G^s_t}(x,r)$ and $v \mapsto \nabla_v X^{H^s_t}(x,r).$ Since
\begin{align*}
    \nabla_v X^{G^s_t}(x,r) = & \nabla_v Y_t^s(x) + dh_t^s(x)(R)\cdot dr(v) \cdot \partial_r \\
    & +r d\left( dh_t^s(x)(R) \right)(v) \partial_r + r dh_t^s(x)(R)\nabla_v\partial_r,
\end{align*}
the triangle inequality implies
\begin{align*}
    \norm{\nabla X^{G^s_t}(x,r)} \leq & \norm{\nabla Y_t^s(x)} + \abs{dh_t^s(x)(R)}\cdot \frac{1}{r}\cdot \norm{dr} \\
    & + \norm{d\left( dh_t^s(x)(R) \right)}\cdot \norm{r\partial r} + \abs{r dh_t^s(x)(R)}\cdot \norm{\nabla \partial_r}\\
    = & \norm{\nabla Y_t^s(x)}_{g_M} + 2\cdot \abs{dh_t^s(x)(R)} + \norm{d\left( dh_t^s(x)(R) \right)}.
\end{align*}
In the equation above, we used $\norm{\nabla Y_t^s(x)}=\norm{\nabla Y_t^s(x)}_{g_M}$, $\norm{r\partial_r}=1$, $\norm{dr}=r$, and $\norm{\nabla\partial_r}=\frac{1}{r}$ (the latter follows from $\nabla_{\partial_r}\partial_r= -\frac{\partial_r}{r}$).  In particular, there exists a constant $C_1\in\R^+$ such that
\[\norm{\nabla X^{G^s_t}(x,r)}\leq C_1\]
for all $(x,r)\in M \times\R^+$, $s\in[0,1]$, and $t\in[0,1]$. 

To estimate $\norm{\nabla X^{H^s_t}(x,r)}$, the following observation will be useful. If $Z$ is a vector field on $M$, then
\[(x,r)\mapsto Z(x)\]
is a vector field on $M\times\R^+$ that is parallel along the curve $t\mapsto (x, r+t)$ for all $x\in M$ and $r\in\R^+$. The covariant derivative can be described in terms of parallel transport as
\[ \nabla_{v}Z=\lim_{h\to 0}\frac{\Pi(\gamma)_h^0Z(\gamma(h))-Z(\gamma)(0)}{h}, \]
where $\gamma$ is a smooth curve with $\gamma'(0)=v$ and $\Pi(\gamma)_h^0$ denotes the parallel transport along $\gamma$ from time $h$ to time $0$. This implies
\[ \nabla_{\partial_r} Y^{s\mu(r)}_t = \lim_{h\to 0}\frac{Y_t^{s\mu(r+h)}- Y_t^{s\mu(r)}}{h} = (\partial_s Y)_t^{s\mu(r)}\cdot s\cdot \mu'(r). \]
To ease the notation, denote $X^s_t:=X^{G^s_t}$. Since
\begin{align*}
    \nabla_{v+ a\partial_r}\left( X_t^{s\mu(r)}(x,r) \right) = & \left( \nabla_{v+ a\partial_r }X\right)_t^{s\mu(r)}(x,r) + a\cdot s\mu'(r)\cdot (\partial_s Y)_t^{s\mu(r)}(x) \\
    & + a\cdot d(\partial_s h)_t^{s\mu(r)}(x)(R)\cdot sr\mu'(r)\partial_r
\end{align*}
for $v\in T_x M$ and $a\in\R$, we can estimate
\begin{align*}
    \norm{\nabla\left(X_t^{s\mu(r)}(x,r)\right)} \leq & \norm{(\nabla X)_t^{s\mu(r)}(x,r)} + \abs{ sr\mu'(r) \cdot (\partial_s Y)_t^{s\mu(r)}(x) }\\
    & +  \abs{d(\partial_s h)_t^{s\mu(r)}(x)(R)\cdot s r\mu'(r)}\\
    \leq & \norm{(\nabla X)_t^{s\mu(r)}(x,r)} + \abs{(\partial_s Y)_t^{s\mu(r)}(x)} + \abs{d(\partial_s h)_t^{s\mu(r)}(x)(R)}.
\end{align*}
As a consequence, there exists a constant $C_2\in\R^+$ such that
\[ \norm{\nabla\left(X_t^{s\mu(r)}(x,r)\right)}\leq C_2 \]
for all $x\in M$, $r\in\R^+$, $s\in[0,1]$, and $t\in[0,1]$. Denote
\[ V_t^s(x,r):= X^{G_t^{s\mu(r)}}(x,t) - X^{H^s_t}(x,r) =  s r \mu'(r) (\partial_s h)_t^{s\mu(r)}(x)\cdot R(x). \]
For $v\in T_x M$ and $a\in\mathbb{R}$, we have
\begin{align*}
    \nabla_{v+a\partial_r} V_t^s(x,r) = & (v + a\partial_r)\left( sr\mu'(r) (\partial_s h)_t^{s\mu(r)}(x) \right)\cdot R(x) \\
    & +  sr\mu'(r) (\partial_s h)_t^{s\mu(r)}(x)\cdot \nabla_v R(x)\\
    = & sr\mu'(r) \left(d(\partial_s h)_t^{s\mu(r)}\right)(x) (v) \cdot R(x) + a s \frac{d}{dr} (r\mu'(r))(\partial_s h)_t^{s\mu(r)}(x) \cdot R(x)  \\
    & + a s^2 r \left(\mu'(r)\right)^2 \left( \partial^2_{ss}h \right)_t^{s\mu(r)}(x) \cdot R(x) +  sr\mu'(r) (\partial_s h)_t^{s\mu(r)}(x)\cdot \nabla_v R(x).
\end{align*}
Consequently, there exist constants $C_3, C_4\in\R^+$ such that
\[\norm{\nabla V_t^s(x,r)}\leq C_3 + C_4\cdot\abs{r^2\mu''(r)} \]
for all $x\in M$, $r\in\R^+$, $s\in[0,1]$, and $t\in[0,1]$. Hence, by the triangle inequality, 
\[\norm{\nabla X^{H^s_t}(x,r)}\leq C_2 + C_3 + C_4\cdot \abs{r^2\mu''(r)}\]
for all $(x,r)\in M\times\R^+$, $s\in[0,1]$, and $t\in[0,1]$. 

\medskip

\noindent \textbf{Step~3} (Final details). Now, we fix $s\in[0,1]$ and apply Lemma~\ref{lem:gronwall} to the vector fields $X^{H^s_t}$ and $X^{G_t^{s\mu(r)}}$. Since $X^{H^s_t}$ and $X^{G_t^{s\mu(r)}}$ are bounded (with respect to $g$), the time-dependent vector field
\[(x,r) \mapsto (1-a)\cdot X^{H^s_t}(x,r) + a\cdot Y^{G^{s\mu(r)}_t}(x,r)\]
is bounded as well. Consequently, the flow of this vector field is globally well defined for all $a\in[0,1]$. Since
\begin{equation*}
    \norm{X^{G_t^{s\mu(r)}}(x,r) - X^{H_t^s}(x,r)} \leq \max_{r\in\R^+}\abs{r\mu'(r)}\cdot \underset{s,t\in[0,1]}{\max_{x\in M}} \norm{(\partial_sh^s_t)(x)R(x)},
\end{equation*}
Lemma~\ref{lem:gronwall} implies there exist constants $k_1, k_2\in\R^+$ such that
\[d\left( \phi_t^s(x,r), \psi_t^{s\mu(r)}(x,r)\right)\leq k_1\cdot \sqrt{t\cdot \max_{r\in\R^+}\abs{r\mu'(r)}}\cdot e^{k_2\cdot t\cdot \left( \underset{r\in\R^+}{\max}\:\abs{r^2\mu''(r)} +1 \right)} \]
for $(x,r)\in M\times \R^+$ and $t\in[0,1].$ In fact, the constants $k_1$ and $k_2$ can be chosen to be independent of the function $\mu$ (that satisfies $\abs{r\mu'(r)}\leq 1$).
\end{proof}

The next lemma proves that one can interpolate between $r\cdot h^0$ and $r\cdot h^1$ in symplectization without creating 1-periodic orbits if $h^0$ and $h^1$ can be joined by a path of contact Hamiltonians that do not have any 1-periodic orbits.

\begin{Lemma}\label{lem:existsmu}
Let $M$ be a closed contact manifold with a fixed contact form and let $h^s: [0,1] \times M \to \R$, $s\in[0,1]$ be a smooth $s$-family of time-dependent contact Hamiltonians. Assume that $h^s$ has no 1-periodic orbits for all $s\in[0,1]$. Then, for every $a\in\R^+$, there exists a smooth function $\mu:\R^+\to [0,1]$ with the following properties
\begin{enumerate}
    \item the Hamiltonian
    \[H^s_t: M\times\R^+\to \R\quad:\quad (x,r)\mapsto r\cdot h^{s\cdot\mu(r)}(x)\]
    has no 1-periodic orbits for all $s\in[0,1]$;
    \item $\mu(r)=0$ for $r\in(0, a]$ and $\mu(r)=1$ for $r$ large enough.
\end{enumerate}
\end{Lemma}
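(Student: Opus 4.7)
The strategy is to construct $\mu$ so that the Hamiltonian isotopy of $H^s_t(x,r) := r \cdot h^{s\mu(r)}_t(x)$ is uniformly close (on the time interval $[0,1]$) to the isotopy of the Hamiltonian $G^{s\mu(r_0)}_t(x,r) = r \cdot h^{s\mu(r_0)}_t(x)$ that treats the starting level $r_0$ as frozen, so that admissibility of each $h^{s'}$ forbids $1$-periodic orbits of $H^s$. To quantify this, first note that for each $s' \in [0,1]$, admissibility gives $\phi^{h^{s'}}_1(x) \ne x$ for every $x \in M$, so the continuous function $(s', x) \mapsto d_M(\phi^{h^{s'}}_1(x), x)$ is strictly positive on the compact set $[0,1] \times M$, yielding $\delta > 0$ with $d_M(\phi^{h^{s'}}_1(x), x) \ge \delta$ for all $(s', x)$. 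By Step~1 of the proof of Lemma~\ref{lem:gronapp}, the Hamiltonian vector field of $G^{s'}_t = r h^{s'}_t$ has $M$-component equal to the contact vector field $Y^{s'}_t$ of $h^{s'}_t$, so $\psi^{s'}_t$ projects to the contact flow of $h^{s'}_t$ on $M$. Because the projection $(M \times \R^+, g) \to (M, g_M)$ is $1$-Lipschitz (the metric $g = g_M + dr \otimes dr / r^2$ is a product), it follows that
\[
d\bigl( \psi^{s'}_1(x, r), (x, r) \bigr) \ge d_M\bigl( \phi^{h^{s'}}_1(x), x \bigr) \ge \delta
\]
for all $(x, r) \in M \times \R^+$ and all $s' \in [0, 1]$.

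Next, I construct $\mu$ so that both $\max_r |r \mu'(r)|$ and $\max_r |r^2 \mu''(r)|$ can be made arbitrarily small. The logarithmic substitution $\tau = \ln r$, $\tilde\mu(\tau) := \mu(e^\tau)$, transforms the scale-invariant derivatives into $r \mu'(r) = \tilde\mu'(\tau)$ and $r^2 \mu''(r) = \tilde\mu''(\tau) - \tilde\mu'(\tau)$. Fixing a smooth $\chi : \R \to [0, 1]$ with $\chi = 0$ on $(-\infty, 0]$ and $\chi = 1$ on $[1, \infty)$, for a parameter $L > 0$ I set
\[
\mu_L(r) := \chi\!\left( \frac{\ln r - \ln a}{L} \right),
\]
which satisfies $\mu_L(r) = 0$ for $r \in (0, a]$ and $\mu_L(r) = 1$ for $r \ge a e^L$, while the chain rule gives
\[
\max_r |r \mu_L'(r)| \le \frac{\|\chi'\|_\infty}{L}, \qquad \max_r |r^2 \mu_L''(r)| \le \frac{\|\chi'\|_\infty + \|\chi''\|_\infty}{L}
\]
for $L \ge 1$.

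Finally, I apply Lemma~\ref{lem:gronapp} with $\mu = \mu_L$ at $t = 1$. Its conclusion bounds the distance $d\bigl(\phi^s_1(x, r),\, \psi^{s \mu_L(r)}_1(x, r)\bigr)$ by
\[
k_1 \sqrt{\max_r |r \mu_L'(r)|} \cdot \exp\!\bigl( k_2 (\max_r |r^2 \mu_L''(r)| + 1) \bigr),
\]
which tends to $0$ uniformly in $(x, r, s)$ as $L \to \infty$. Choose $L$ so large that this bound is smaller than $\delta/2$ and that $|r \mu_L'(r)| < 1$ (so Lemma~\ref{lem:gronapp} applies). If $(x, r)$ were a $1$-periodic orbit of $H^s$, i.e.\ $\phi^s_1(x, r) = (x, r)$, the triangle inequality would give $d\bigl(\psi^{s \mu_L(r)}_1(x, r), (x, r)\bigr) < \delta/2$, contradicting the lower bound $\delta$ from the first step applied to $s' = s \mu_L(r) \in [0, 1]$. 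Hence $H^s$ has no $1$-periodic orbits, as required. The main delicacy is that Lemma~\ref{lem:gronapp} amplifies $\max_r |r^2 \mu''(r)|$ inside an exponential, so it is essential that both derivative quantities can be driven to zero \emph{simultaneously}; the logarithmic rescaling is exactly the device that achieves this with a single parameter.
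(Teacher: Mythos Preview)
Your proof is correct and follows essentially the same approach as the paper's: both use the uniform positive lower bound on $d_M(x,\varphi^{s'}_1(x))$ coming from admissibility, invoke Lemma~\ref{lem:gronapp} to control the discrepancy between $\phi^s_1$ and $\psi^{s\mu(r)}_1$, and construct $\mu$ via the logarithmic substitution $\mu(r)=\chi((\ln r-\ln a)/L)$ (the paper writes this as $\mu(r)=f(\ln r)$ with $f(t)=\tilde f(kt)$, $k\sim 1/L$) so that both $|r\mu'(r)|$ and $|r^2\mu''(r)|$ can be made arbitrarily small. The only cosmetic difference is that the paper first extracts a threshold $\delta$ on the derivative quantities from the estimate in Lemma~\ref{lem:gronapp} and then builds $\mu$ to meet it, whereas you parametrize directly by $L$ and send $L\to\infty$.
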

\begin{proof}
Let $g_M$ be a Riemannian metric on $M$. Denote by $g$ the Riemannian metric on $M\times\R^+$ given by $g:= g_M+\frac{dr\otimes dr}{r^2}.$ Denote by $d_M$ and $d$ the metrics on $M$ and $M\times\R^+$ furnished by $g_M$ and $g$, respectively. The Pythagorean theorem implies
\[d_M(x,y)\leq d\big((x, r_x), (y, r_y)\big)\]
for all $x,y\in M$ and $r_x, r_y\in\R^+$. Denote by $\varphi^s$, $\phi^s$, and $\psi^s$ the isotopies of the contact Hamiltonian $h^s$, the Hamiltonian $H^s$, and the Hamiltonian on $M \times \R^+$ defined by $(x,r)\mapsto r\cdot h^s_t(x)$, respectively. In particular, $\psi^s_t(x,r)= \left(\varphi^s_t(x), f_t^s(x)\cdot r\right)$ for a certain positive smooth function $f_t^s:M\to\R^+$. Since $h^s$ has no 1-periodic orbits for all $s\in[0,1]$, the number
\[\varepsilon:=\inf_{x\in M, \,\, s\in[0,1]} d_M\left( x, \varphi^s_1(x) \right)\]
is positive. By Lemma~\ref{lem:gronapp}, there exists $\delta\in\R^+$ such that 
\[d\left( \phi_t^s(x,r), \psi^{s\mu(r)}_t(x,r) \right) <\frac{\varepsilon}{2}\]
if $\abs{r\mu'(r)}<\delta$ and $\abs{r^2\mu''(r)}<\delta$ for all $r\in\R^+$. Hence, the triangle inequality implies
\begin{align*}
    d\left((x,r), \phi^s_t(x,r)\right) \geq & d\left((x,r), \psi^{s\mu(r)}_t(x,r)\right) - d\left( \psi_t^{s\mu(r)}(x,r), \phi^s_t(x,r) \right)\\
    \geq & d_M\left(x, \varphi_t^{s\mu(r)}(x)\right) - \frac{\varepsilon}{2} \geq \frac{\varepsilon}{2}
\end{align*}
if $\abs{r\mu'(r)}<\delta$ and $\abs{r^2\mu''(r)}<\delta$ for all $r\in\R^+$. Therefore, it is enough to prove that for every $a, \delta\in\R^+$ there exists a smooth function $\mu:\R^+\to [0,1]$ such that
\begin{enumerate}
    \item $\mu(r)=0$ for $r\in(0,a],$
    \item $\mu(r)=1$ for $r$ big enough,
    \item $\abs{r\cdot\mu'(r)}<\delta$ for all $r\in\R^+$,
    \item $\abs{r^2\cdot\mu''(r)}<\delta$ for all $r\in\R^+$.
\end{enumerate}
The function $\mu$ can be constructed as follows. There exists a smooth function $f:\R\to[0,1]$ such that
\begin{enumerate}
    \item $f(t)=0$ for $t\in(-\infty, \ln a)$,
    \item $f(t)=1$ for $t$ large enough,
    \item $\abs{f'(t)}<\frac{\delta}{2}$ for all $t\in\R$,
    \item $\abs{f''(t)}<\frac{\delta}{2}$ for all $t\in\R$.
\end{enumerate}
Indeed, one can start with a function $\tilde{f}:\R\to[0,1]$ that satisfies the first two conditions, and then, define $f(t):=\tilde{f}(k\cdot t)$ for $k\in(0,1)$ sufficiently small. Now, take $\mu(r):=f(\ln r)$. By the construction, $\mu(r)=0$ for $r\in(0,a)$ and $\mu(r)=1$ for $r$ large enough. Additionally,
\[\abs{e^t\cdot \mu(e^t)}= \abs{f'(t)}<\frac{\delta}{2}\]
and
\begin{align*}
    \abs{e^{2t}\cdot \mu''(e^t)}&\leq \abs{e^{2t}\cdot \mu''(e^t) + e^t\cdot \mu'(e^t) } + \abs{e^t\cdot \mu'(e^t)}\\
    &= \abs{f''(t)} + \abs{f'(t)} < \delta
\end{align*}
for all $t\in\R$. Hence, $\abs{r\mu'(r)}<\delta$ and $\abs{r^2\mu''(r)}<\delta$ for all $r\in\R^+$. This shows that the function $\mu$ with the desired properties exists and finishes the proof.
\end{proof}

\vspace*{5mm}

\bibliographystyle{amsplain}
\bibliography{biblio_loop}
\vspace*{5mm}

\end{document}